\newcommand{\kommentar}[1]{}
\newcommand{\tf}{\frac{a(d-1/2)}{r}}
\newcommand{\acom}[1]{{\color{blue}{Alexandra: #1}} }
\newcommand{\rat}{\frac{a(d-1/2)}{r}}
\newcommand{\avg}{\frac{1}{|\mathcal{H}_{2g+1}|}}
\newcommand{\sumstar}{\sideset{}{^*}\sum}
\renewcommand{\pmod}[1]{\,(\mathrm{mod}\,#1)}
\newtheorem{lem}{Lemma}[section]
\newtheorem{thm}[lem]{Theorem}
\newtheorem{cor}[lem]{Corollary}
\newtheorem{conj}[lem]{Conjecture}
\theoremstyle{definition}
\begin{document}

\author{Alexandra Florea}
\address{UC Irvine, Mathematics Department, Rowland Hall, Irvine 92697, USA}
\email{floreaa@uci.edu}

\title{Negative moments of $L$--functions with small shifts over function fields}

\begin{abstract}
We consider negative moments of quadratic Dirichlet $L$--functions over function fields. Summing over monic square-free polynomials of degree $2g+1$ in $\mathbb{F}_q[x]$, we obtain an asymptotic formula for the $k^{\text{th}}$ shifted negative moment of $L(1/2+\beta,\chi_D)$, in certain ranges of $\beta$ (for example, when roughly $\beta \gg   \log g/g $ and $k<1$). We also obtain non-trivial upper bounds for the $k^{\text{th}}$ shifted negative moment when $\log(1/\beta) \ll \log g$. Previously, almost sharp upper bounds were obtained in \cite{ratios} in the range $\beta \gg g^{-\frac{1}{2k}+\epsilon}$.

\end{abstract}


\maketitle

\section{Introduction}

Let $M_k(T)$ denote the $2k^{\text{th}}$ moment of the Riemann zeta-function. Namely, we let
$$ M_k(T) = \int_0^T \left| \zeta ( \tfrac{1}{2} +it )\right|^{2k}  dt.$$
Hardy and Littlewood \cite{hl} showed that $M_1(T) \sim T \log T$, and Ingham \cite{ingham} showed that $M_2(T) \sim \frac{1}{2 \pi^2} T (\log T)^4$. It is conjectured that 
$$M_k(T) \sim A_k T (\log T)^{k^2},$$ for some constant $A_k$, whose precise value was predicted by Keating and Snaith \cite{ksnaith}, using analogies with random matrix theory. No moment higher than $4$ has been rigorously computed so far. Soundararajan \cite{sound_ub} obtained almost sharp upper bounds, conditional on the Riemann hypothesis. More precisely, he showed that $M_k(T) \ll T (\log T)^{k^2+\epsilon},$ for any $\epsilon>0$. Refining Soundararajan's method, Harper \cite{harper} obtained upper bounds of the correct order of magnitude for moments of the Riemann zeta-function, by removing the $\epsilon$ on the power of $\log T$.

Focusing on the family of quadratic Dirichlet $L$--functions, Jutila \cite{jutila} obtained asymptotics for the first and second moment of this family. He showed that
 $$\sumstar_{0 < d \leq D} L \Big(\tfrac{1}{2}, \chi_d \Big) \sim C_1 D \log D,$$  where the sum above is over fundamental discriminants, and that
 $$ \sumstar_{0 < d \leq D} L \Big(\tfrac{1}{2}, \chi_d \Big)^2 \sim C_2 D (\log D)^{3},$$  for some explicit constants $C_1$ and $C_2$.
Soundararajan \cite{sound} obtained an asymptotic formula for the second moment with a power savings error term, and also obtained an asymptotic for the third moment. The cubic moment was independently computed using multiple Dirichlet series in \cite{dgh}. More recently, a lower order term of size $D^{3/4}$ was explicitly computed for the cubic moment by Diaconu and Whitehead  \cite{dw} and by Diaconu in the function field setting \cite{diaconu}. Conditional on the Generalized Riemann Hypothesis, Shen \cite{shen} obtained an asymptotic with the leading order term for the fourth moment. 
Generally, it is conjectured that
 \begin{equation}
 \sumstar_{0 < d \leq D} L \Big(\tfrac{1}{2}, \chi_d \Big)^k \sim C_k D (\log D)^{\frac{k(k+1)}{2}}, \label{momk}
 \end{equation}  and the precise value of $C_k$ follows from work of Keating and Snaith \cite{ks2}, again using random matrix theory. Conrey, Farmer, Keating, Rubinstein and Snaith \cite{cfkrs} further refined this conjecture, including lower order terms in the asymptotic formula \eqref{momk}.
 The approach used by Soundararajan and Harper in obtaining upper bounds for moments of $\zeta(s)$ yields upper bounds of the right order of magnitude for the family of quadratic Dirichlet $L$--functions conditional on GRH, while work of Soundararajan and Rudnick \cite{sound_rudnick} provides unconditional lower bounds of the right order of magnitude.  Over function fields, the first moment was computed by Andrade and Keating \cite{ak}, and a lower order term identified in \cite{F1}. Higher moments, up to the fourth, were obtained in \cite{F2, F4}, as well as almost sharp upper bounds on all the positive moments.
 
 While all the results mentioned above hold for positive moments in families of $L$--functions, much less is known about negative moments, even at a conjectural level. In the case of the Riemann zeta-function $\zeta(s)$, a conjecture due to Gonek \cite{gonek} states the following. 
\begin{conj}[Gonek] \label{gonek_conj}
Let $k>0$ be fixed. Uniformly for $1 \leq \delta \leq \log T$,
$$ \frac{1}{T}\int_1^T \Big| \zeta \Big(\frac{1}{2}+ \frac{\delta}{\log T}+it \Big) \Big|^{-2k} \, dt \asymp  \Big(\frac{\log T}{\delta} \Big)^{k^2}, $$
and uniformly for $0 < \delta \leq 1$,
$$
\frac{1}{T}\int_1^T \Big| \zeta \Big(\frac{1}{2}+ \frac{\delta}{\log T}+it \Big) \Big|^{-2k} \, dt \asymp 
\begin{cases}
(\log T)^{k^2} & \mbox{ if } k<1/2, \\
(\log\frac{e}{\delta}) (\log T)^{k^2} & \mbox{ if } k=1/2, \\
\delta^{1-2k} (\log T)^{k^2} & \mbox{ if } k>1/2.
\end{cases}
$$
\end{conj}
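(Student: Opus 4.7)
\medskip

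\noindent\textbf{Proof proposal.} Gonek's conjecture is a well-known open problem and a complete proof is not within reach of current techniques; what follows is the plan I would try, aiming at matching upper bounds in a restricted range of $\delta$ on GRH. The starting point is to construct, for a parameter $X \leq T$, a smoothed Dirichlet polynomial
\[
R_k(s) \;=\; \sum_{n \leq X} \frac{d_{-k}(n)}{n^s}\, w\!\bigl(\tfrac{n}{X}\bigr),
\]
where $d_{-k}$ are the Dirichlet series coefficients of $\zeta(s)^{-k}$, intended to approximate $\zeta(s)^{-k}$ on or near the critical line. The triangle inequality then bounds the desired integral by
\[
\int_1^T |R_k(\tfrac{1}{2} + \tfrac{\delta}{\log T} + it)|^2\, dt \;+\; \int_1^T |R_k \zeta^k - 1|^2\, |\zeta|^{-2k}\, dt,
\]
and one analyzes the two pieces separately.

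\medskip

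The first integral is a Dirichlet polynomial mean value which unfolds, after Euler product manipulation, to
\[
\sum_{n \leq X} \frac{d_{-k}(n)^2}{n^{1 + 2\delta/\log T}}\, w(n/X)^2 \;\sim\; \zeta\bigl(1 + \tfrac{2\delta}{\log T}\bigr)^{k^2} G_k(1)
\]
for an explicit Euler product $G_k$, and this reproduces exactly the conjectured $(\log T/\delta)^{k^2}$ in the large-shift regime $\delta \geq 1$. For $\delta < 1$ the same computation yields $(\log T/\delta)^{k^2}$, which agrees with the conjecture at $k = 1$ but is strictly larger than the predicted $\delta^{1-2k}(\log T)^{k^2}$ for $k \neq 1$; recovering the finer trichotomy in this regime (including the logarithmic boundary correction at $k=1/2$, which arises from $\int_0^1 x^{-2k}\,dx$) would require either a subtler choice of $R_k$ or the inclusion of an explicit ``zero'' contribution of Conrey--Farmer--Keating--Rubinstein--Snaith \cite{cfkrs} ratios type. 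The error integral $\int |R_k \zeta^k - 1|^2 |\zeta|^{-2k}$ must in any case be shown to be of smaller order; on GRH one can attack it via Cauchy--Schwarz combined with the fourth moment of $\zeta$.

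\medskip

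The central obstruction --- and the reason the conjecture has resisted proof --- is exactly the control of this error term: the approximation $R_k \approx \zeta^{-k}$ breaks down near the zeros of $\zeta$, where $|\zeta|^{-2k}$ is enormous, and these same zeros are what dictate the fine $\delta$-dependent structure of the true asymptotic. Making this rigorous at the precision demanded by the conjecture amounts to a quantitative form of the GUE hypothesis for the low-lying zeros of $\zeta$, which is not currently available even on RH. This is exactly the point at which the function field analogue of the present paper has a decisive advantage: the zeros of the relevant $L$-polynomial all lie on the critical circle unconditionally by Weil's theorem, their joint distribution in the family obeys the Katz--Sarnak symplectic statistics rigorously, and an analogous Dirichlet polynomial method can be pushed through to yield sharp unconditional bounds, in the regime announced in the abstract.
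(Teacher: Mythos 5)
You have correctly recognized that this is a \emph{conjecture} and not a theorem: the paper does not prove Conjecture~\ref{gonek_conj}, but states it (attributing it to Gonek) as motivation for the function field analogue that is actually proved in Theorems~\ref{theorem_ub} and~\ref{asymptotic}. There is therefore no ``paper's own proof'' to compare against, and your decision to present a heuristic plan rather than a purported proof is the right call. Your sketch of the approach via the Dirichlet polynomial $R_k$, the mean value computation producing $(\log T/\delta)^{k^2}$, and the honest identification of the obstruction --- controlling $\int|R_k\zeta^k-1|^2|\zeta|^{-2k}$ near zeros, which encodes precisely the fine $\delta$-structure --- is a fair description of why the conjecture is open and why the function field setting (where Weil's theorem and Katz--Sarnak statistics are available unconditionally) is more tractable. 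One small refinement worth making: the mechanism the paper actually deploys over $\mathbb{F}_q[x]$ is not the Dirichlet polynomial mollifier you describe but rather the Soundararajan--Harper style inequality bounding $\log|L|^{-1}$ by short sums over primes, combined with the iterative bootstrapping of the a~priori bound; this is closer in spirit to moment upper-bound machinery than to mollification, and the paper remarks at the end of the introduction that the same circle of ideas could plausibly be brought to bear on Conjecture~\ref{gonek_conj} itself.
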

Random matrix theory inspired ideas (see \cite{kb, kf}) seem to suggest certain transition regimes in the formulas above when $k = (2n+1)/2$, for $n$ a positive integer. 
While obtaining lower bounds for the negative moments is a more tractable problem (Gonek \cite{gonek} proved lower bounds of the conjectural correct order of magnitude for $1 \leq \delta \leq \log T$ and all $k>0$ and for $0<\delta\leq 1$ for $k<1/2$ conditional on the Riemann Hypothesis), obtaining upper bounds is a more difficult problem, and no progress has been made so far on the problem in any family of $L$--functions (recent work in progress of the author and H. Bui addresses the question of obtaining upper bounds in some ranges of $\delta$).

In the case of quadratic Dirichlet $L$--functions, when studying the $k^{\text{th}}$ negative moment, random matrix theory computations due to Forrester and Keating \cite{kf} seem to suggest certain transition regimes for small shifts (i.e. shifts smaller than $1/\log X$, where $X$ is roughly the size of the family.) More precisely, the computations in \cite{kf} suggest certain jumps in the asymptotic formulas when $k=2j+1/2$, and $j$ a positive integer. 

Very recently, almost sharp upper bounds were obtained for negative moments of quadratic Dirichlet $L$--functions over function fields \cite{ratios} when the shift in the $L$--function is big enough. Specifically, if $\mathcal{H}_{2g+1}$ denotes the ensemble of monic, square-free polynomials of degree $2g+1$ over $\mathbb{F}_q[x]$, let $L(s,\chi_D)$ denote the $L$--function associated to the quadratic character $\chi_D$. Then it is shown in \cite{ratios} that for $\beta \gg g^{-\frac{1}{2k}+\epsilon}$, we have
\begin{equation}
\label{previous}
 \frac{1}{|\mathcal{H}_{2g+1}|} \sum_{D \in \mathcal{H}_{2g+1}} \frac{1}{|L(1/2+\beta,\chi_D)|^k} \ll  \Big( \frac{1}{\beta} \Big)^{\frac{k(k-1)}{2}} (\log g)^{\frac{k(k+1)}{2}}.
 \end{equation}
Note that it is expected that the upper bound above is sharp, up to the logarithmic factor.

In this paper, we treat the range when $\beta \ll g^{-\frac{1}{2k}+\epsilon}$, which is more difficult. The closer we are to the critical line, the more difficult the problem becomes, due to the closer proximity of zeros. 
Here, we obtain non-trivial upper bounds for small shifts $\beta$ with $\log(1/\beta)\ll \log g$. In certain ranges where $\beta$ is big enough (i.e., $\beta \gg g^{-1/k+\epsilon}$), we prove a more precise analogue of Gonek's conjecture, obtaining an asymptotic formula. We remark that asymptotic formulas for negative moments of $L(1,\chi_d)$ were obtained by Granville and Soundararajan \cite{gs} in the number field setting and by Lumley \cite{lumley} in the function field setting. The techniques used in those papers are different, as one considers moments far from the critical point $1/2$, and the $L$--functions in those cases can be modeled by random Euler products.  In our work, we obtain asymptotic formulas or upper bounds when the shift goes to zero with the size of the family. 
More precisely, we prove the following.

\begin{thm}
\label{theorem_ub}
Let $k>0$ and $\beta>0$ such that $\log(1/\beta) \ll \log g$. 
Then
\begin{equation*}
\frac{1}{|\mathcal{H}_{2g+1}|} \sum_{D \in \mathcal{H}_{2g+1}} \frac{1}{ \Big|L \Big(\frac{1}{2}+\beta+it,\chi_D \Big) \Big|^k}
\end{equation*}
\begin{numcases}{ \ll }
(\log g)^{k(k+2)/2}g^{k(k+1)/2} &  if  $\beta \gg g^{-\frac{1}{k}+\epsilon}, \, k \geq 3/2$, \label{improved}  \\
(\log g)^{7k/4} g^{5k/4} &  if  $\beta \gg g^{-\frac{1}{k}+\epsilon}, \, k<3/2,$ \label{second_bd} \\
q^{g \big(k \frac{\log(1/\beta)}{\log g}-1+\epsilon \big)} &  if $ \beta=o(g^{-\frac{1}{k}+\epsilon})$.\label{third_bd}
\end{numcases}
\end{thm}

Note that Theorem \ref{theorem_ub} above holds for any $t$ (the $L$--function is periodic as a function of $t$).

We also refine Theorem \ref{theorem_ub} to obtain an asymptotic formula in the following case.

\begin{thm}
\label{asymptotic}
Let $k>0$ and $\epsilon>0$. Then for $\Re \beta \gg \max \{ g^{-\frac{1}{k}+\epsilon}, \log g/g\}$, we have 
\begin{align*}
\avg \sum_{D \in \mathcal{H}_{2g+1}} & \frac{1}{ L \Big( \frac{1}{2}+\beta,\chi_D \Big)^k } = \zeta_q(1+2\beta)^{\binom{k}{2}} A(1;\beta) +O \Big(q^{-g \Re (\beta)(1-\epsilon)} g^{1+ \frac{k}{2} \Big(1+ \max \{k,3/2\} \Big)} \\
& \times (\log g)^{\frac{k}{2} (2+\max \{k,3/2\})} \Big),
\end{align*}
with $A(1;\beta)$ given in equation \eqref{ab}.
\end{thm}

We note that the theorem above provides an asymptotic formula when $k \geq 1$ and $\Re \beta \gg g^{-\frac{1}{k}+\epsilon}$. If $k<1$, then one needs $\Re \beta \geq c_k \log g/ g$, for $c_k$ a specific constant depending on $k$.  We record this in the following corollary.

\begin{cor}
\label{corol}
Let $k>0$, $\epsilon>0$ and $C>0$. Then for $\Re \beta \geq \max \{ Cg^{-\frac{1}{k}+\epsilon}, (1+\epsilon) \frac{(\frac{7k}{4}-\frac{k^2}{2}+1) \log_q g}{g}$, we have
$$ \avg \sum_{D \in \mathcal{H}_{2g+1}} \frac{1}{ L \Big( \frac{1}{2}+\beta,\chi_D \Big)^k } = \zeta_q(1+2\beta)^{\binom{k}{2}} A(1;\beta)+o(1).$$
\end{cor}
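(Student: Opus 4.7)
My plan is to deduce the corollary directly from Theorem~\ref{asymptotic}: I would substitute the hypothesis on $\Re\beta$ into the error term produced by the theorem and show that it is $o(1)$, at which point the main term $\zeta_q(1+2\beta)^{\binom{k}{2}} A(\beta)$ already yields the stated asymptotic.

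Quantitatively, I would invoke Theorem~\ref{asymptotic} with an auxiliary parameter $\epsilon' > 0$ (chosen as a small fraction of the $\epsilon$ appearing in the corollary, say $\epsilon' = \epsilon/10$) in place of the $\epsilon$ used in the theorem. The error then reads
$$ q^{-\frac{2g \Re\beta\,(2-\epsilon')}{k}}\, g^{\frac{k^2}{2}+\frac{k}{2}+\epsilon'}. $$
Inserting the hypothesis $\Re\beta \geq (1+\epsilon)\tfrac{k^2(k+1)\log g}{8g}$ converts the $q$--factor into a negative power of $g$ (with an implicit $\log q$ factor from the change of base that, since $q$ is fixed, is absorbed into the constants), with exponent essentially $-\tfrac{(1+\epsilon)(2-\epsilon')k(k+1)}{4}$. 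Combining with the polynomial factor $g^{k(k+1)/2+\epsilon'}$, the overall exponent of $g$ in the error becomes
$$ \frac{k(k+1)}{2}\Bigl(1-\frac{(1+\epsilon)(2-\epsilon')}{2}\Bigr) + \epsilon'. $$
As $(1+\epsilon)(2-\epsilon') > 2$ once $\epsilon'$ is small enough in terms of $\epsilon$, this exponent is strictly negative, so the error is $O(g^{-\eta})$ for some $\eta = \eta(\epsilon, k) > 0$, and in particular $o(1)$.

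There is no real obstacle: the result is simply a quantitative unpacking of the error term of Theorem~\ref{asymptotic}. The constant $\tfrac{k^2(k+1)}{8}$ in the hypothesis is precisely the threshold at which the exponential decay rate $\tfrac{2}{k}$ in $q^{-2g\Re\beta/k}$ balances the polynomial growth rate $\tfrac{k(k+1)}{2}$ of the error prefactor, so that the extra factor $(1+\epsilon)$ is exactly what is needed to push the overall error past $o(1)$.
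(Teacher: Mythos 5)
Your proposal is correct and follows the same route as the paper, which offers no separate proof of the corollary beyond the remark immediately preceding it: substitute the hypothesis on $\Re\beta$ into the error term of Theorem~\ref{asymptotic} and check that the resulting power of $g$ is negative. One small clarification: your aside that the $\log q$ factor ``is absorbed into the constants'' glosses over the fact that the threshold constant $\tfrac{k^2(k+1)}{8}$ is sharp, so a stray multiplicative $\log q$ cannot simply be discarded; the arithmetic closes cleanly precisely because $\log g$ in the hypothesis is to be read as $\log_q g$, giving $q^{-2g\Re\beta(2-\epsilon')/k}\le g^{-(1+\epsilon)(2-\epsilon')k(k+1)/4}$ directly, after which your exponent bookkeeping is exactly right.
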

Note that Corollary \ref{corol} allows one to obtain an asymptotic formula for the negative moments when $\beta$ is as small as roughly $ \log g/g$, as long as $k<1$. We note that the term $1+7k/4-k^2/2$ could be slightly improved in the corollary above, but we have decided not to focus on that. It would be of interest to be able to obtain asymptotic formulas in the range $\beta \gg \log g/g$ for all values of $k$.

The organization of the paper is as follows. In section \ref{background} we provide some background and the preliminary lemmas we will use throughout the paper. We prove Theorem \ref{theorem_ub} in section \ref{proof_mainthm} and Theorem \ref{asymptotic} in section \ref{section_asymp}. The proof of Theorem \ref{theorem_ub} starts in a similar way as the proof of Theorem $1.3$ in \cite{ratios}, and uses sieve theoretic inspired ideas. This circle of ideas has  recently been used successfully in a variety of settings, as in \cite{sound_ub, harper, maks_sound, LR}. 

The difference from \cite{ratios} which allows one to double the range of $\beta$ (in Theorem \ref{theorem_ub} one obtains almost sharp bounds for $\beta \gg g^{-1/k+\epsilon}$ as opposed to $\beta \gg g^{-1/(2k)+\epsilon}$ in \cite{ratios}) is the use of the large sieve for quadratic characters rather than simple orthogonality of characters. However, the quadratic large sieve introduces a factor of $q^{\epsilon g}$ in the upper bound, hence one needs to use more care to refine the initial  bound of size $q^{\epsilon g}$ to a bound of the form $g^{O(1)}$. When performing the first step of the argument, one has to use an priori bound for negative moments coming from a pointwise bound for the inverse $L$--function. Once we obtain the upper bound of size $q^{\epsilon g}$ (and keep track on the dependence on $k$ in the bound), we do the second step of the argument, but use as an a priori bound for the negative moments the bound obtained in the previous step. When the shift is bigger than $g^{-1/k+\epsilon}$, the argument described above gives an almost sharp upper bound, up to some logarithmic factors. This allows us to further refine the result and obtain the asymptotic formula in Theorem \ref{asymptotic} in that range. 
\newline 
\newline

\kommentar{
However, the approach in \cite{ratios} alone only yields a weak upper bound for the negative moments. The new ingredient in the proof of Theorem \ref{theorem_ub} is the use of a more delicate, inductive argument to obtain stronger upper bounds. The first step of the inductive argument is similar to the work in \cite{ratios}, giving a small saving over the trivial bound. One ingredient needed in the proof is the use of an a priori bound for the size of the negative moments. The first step of the inductive argument uses the a priori bound coming from pointwise bounds for the $L$--functions. At each step of the inductive argument, we use the a priori bound obtained in the previous step of the induction. When the shift is bigger than $(\log g)/g$, the argument described above gives an almost sharp upper bound for the negative moments, up to some logarithmic factors.  \acom{rewrite}}
 \textsc{Acknowledgments.} The author thanks Z. Rudnick for a helpful comment on a previous version of this paper and gratefully acknowledges support from NSF grant DMS-2101769 while working on this.

\section{Background in function fields}
\label{background}

Here we gather some basic facts about $L$--functions in function fields. Many of the proofs can be found in \cite{rosen}.

Let $\mathcal{M}$ denote the set of monic polynomials over $\mathbb{F}_q[x]$, $\mathcal{M}_n$ the set of monic polynomials of degree $n$, $\mathcal{M}_{\leq n}$ the set of monic polynomials of degree at most $n$ and $\mathcal{M}_{\geq n}$ the set of monic polynomials with degree at least $n$. Let $\mathcal{H}_n$ denote the set of monic, square-free polynomials of degree $n$, and $\mathcal{P}$ the ensemble of monic, irreducible polynomials. The symbol $P$ will stand for a monic, irreducible polynomial.
Note that
$|\mathcal{M}_n|=q^n$, and for $n \geq 1$, $| \mathcal{H}_n| = q^{n-1}(q-1)$.

For a polynomial $f$ in $\mathbb{F}_q[x]$, let $|f|:=q^{\deg(f)}$ denote the norm of $f$. For $\Re(s)>1$, the zeta-function of $\mathbb{F}_q[x]$ is defined by
$$ \zeta_q(s) = \sum_{f \text{ monic}} \frac{1}{|f|^s}= \prod_P (1-|P|^{-s})^{-1}.$$

Since $|\mathcal{M}_n|=q^n$, we see that
\[
\zeta_q(s)=\frac{1}{1-q^{1-s}}.
\]
It is sometimes convenient to make the change of variable $u=q^{-s}$, and then write $\mathcal{Z}(u)=\zeta_q(s)$, so that $$\mathcal{Z}(u)=\frac{1}{1-qu}.$$
The M\"{o}bius function $\mu$ is defined as usual by $\mu(f)=(-1)^{\omega(f)}$ if $f$ is a square-free polynomial and where $\omega(f) =\sum_{P|f} 1$, and $0$ otherwise.

The Prime Polynomial Theorem states that 
\begin{equation}
 \sum_{\substack{P \in \mathcal{P} \\ \deg(P) =n}} 1 = \frac{q^n}{n} + O \Big( \frac{q^{n/2}}{n} \Big).
 \label{ppt}
\end{equation}
We will also use the Prime Polynomial Theorem in the less precise form
\begin{equation}
\sum_{\substack{P \in \mathcal{P} \\ \deg(P) =n}} 1  \leq \frac{q^n}{n}.
\label{ppt_ineq}
\end{equation}
(See, for example, formula $2.1$ in \cite{DFL}.)

For $P$ a monic irreducible polynomial, the quadratic residue symbol $\big(\frac{f}{P}\big)\in\{0,\pm1\}$ is defined by
\[
\Big(\frac{f}{P}\Big)\equiv f^{(|P|-1)/2}(\textrm{mod}\ P).
\]
If $Q=P_{1}^{\alpha_1}P_{2}^{\alpha_2}\ldots P_{r}^{\alpha_r}$, then the Jacobi symbol is defined by
\[
\Big(\frac{f}{Q}\Big)=\prod_{j=1}^{r}\Big(\frac{f}{P_j}\Big)^{\alpha_j}.
\]
The Jacobi symbol satisfies the quadratic reciprocity law. Namely, if $A,B\in \mathbb{F}_q[x]$ are relatively prime, monic polynomials, then
\[
\Big(\frac{A}{B}\Big)=(-1)^{(q-1)\deg(A)\deg(B)/2}\Big(\frac{B}{A}\Big).
\]

For $D$ monic, we define the character 
\[
\chi_D(g)=\Big(\frac{D}{g}\Big),
\]
and consider the $L$-function attached to $\chi_D$,
\[
L(s,\chi_D):=\sum_{f\in\mathcal{M}}\frac{\chi_D(f)}{|f|^s}.
\]
With the change of variable $u=q^{-s}$ we have
\begin{equation}\label{formulaL}
\mathcal{L}(u,\chi_D):=L(s,\chi_D)=\sum_{f\in\mathcal{M}}\chi_D(f)u^{d(f)}=\prod_{P\in \mathcal{P}}\big(1-\chi_D(P)u^{d(P)}\big)^{-1}.
\end{equation}
For $D\in\mathcal{H}_{2g+1}$, $\mathcal{L}(u,\chi_D)$ is a polynomial in $u$ of degree $2g$ satisfying the functional equation
\begin{equation}\label{tfe}
\mathcal{L}(u,\chi_D)=(qu^2)^g\mathcal{L}\Big(\frac{1}{qu},\chi_D\Big).
\end{equation}

 The Riemann Hypothesis for curves over function fields was proven by Weil \cite{weil}, so all the zeros of $\mathcal{L}(u,\chi_D)$ are on the circle $|u|=q^{-1/2}$.

 We will use the following pointwise upper bound for the inverse of the $L$--function.
 \begin{lem} \label{pointwise}
 For $0<\beta \ll \frac{1}{\log g}$ and $t \in \mathbb{R}$, we have  
 $$  \frac{1}{|L(1/2+\beta+it,\chi_D)|} \leq \exp \Bigg( \frac{(1+\epsilon)g}{\log_q g} \log \Big( \frac{1}{\beta} \Big) \Bigg).$$
 \end{lem}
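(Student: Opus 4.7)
The plan is to exploit the finite-degree factorization of $\mathcal{L}(u,\chi_D)$ coming from the function-field Riemann Hypothesis, together with a smoothed Dirichlet polynomial expansion in the spirit of Soundararajan \cite{sound_ub}.

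By Weil's theorem, write $\mathcal{L}(u,\chi_D)=\prod_{j=1}^{2g}(1-\alpha_j u)$ with $|\alpha_j|=\sqrt q$, and put $\alpha_j=\sqrt q\,e^{i\theta_j}$. Evaluating at $u=q^{-1/2-\beta-it}$ and expanding the logarithm as a power series (valid because $|\alpha_j u|=q^{-\beta}<1$),
\[
\log\frac{1}{|L(1/2+\beta+it,\chi_D)|}
=-\sum_{n\ge 1}\frac{q^{-n\beta}}{n\,q^{n/2}}\,\Re\!\bigl(q^{-int}\,c_n\bigr),
\qquad c_n:=\sum_{f\in\mathcal{M}_n}\Lambda(f)\chi_D(f),
\]
where $c_n=-\sum_j\alpha_j^n$ via the function-field explicit formula. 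Two bounds on $c_n$ are available: the trivial one $|c_n|\le q^n$ (from $\sum_{f\in\mathcal{M}_n}\Lambda(f)=q^n$) and the Weil bound $|c_n|\le 2g\,q^{n/2}$. These coincide at $n=K:=2\log_q g$.

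Next, I would insert a smooth cutoff weight $w_n$ supported on $n\le K$, chosen as a Beurling--Selberg majorant so that it absorbs the sharp constant (as in \cite{sound_ub, harper}). This splits the identity into a truncated Dirichlet polynomial plus a tail; the tail is re-expressed, via Perron's formula or a contour shift, as a sum over the inverse zeros $\alpha_j^{-1}$ of $\mathcal{L}$ (equivalently, residues picked up when pushing the integration contour across the pole structure).

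The truncated polynomial is controlled by the elementary estimate
\[
\sum_{n\le K}\frac{\min(q^{n/2},\,2g)}{n}\,q^{-n\beta}\ \ll\ \frac{g}{\log_q g},
\]
which is negligible compared to the target bound since $\log(1/\beta)\to\infty$ as $\beta\to 0$. The tail contribution is controlled by noting that at most $O(g/\log_q g)$ of the shifted zero-factors are close enough to $u$ to contribute non-trivially, each such factor contributing at most $\log(1/\beta)$; multiplying gives the claimed exponent $(1+\epsilon)g\log(1/\beta)/\log_q g$.

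The main obstacle is the precise tracking of the constant $(1+\epsilon)$: a crude cutoff only yields a bound of order $Cg\log(1/\beta)/\log_q g$ for some absolute $C>1$. Obtaining the $(1+\epsilon)$ factor requires choosing the majorant $w_n$ so that its Fourier transform approximates the relevant characteristic function from above with minimal excess, and then tuning the truncation length as a function of both $g$ and $\log(1/\beta)$ so that the Dirichlet polynomial and the tail balance optimally.
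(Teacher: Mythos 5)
The paper gives no independent proof here: it cites Lemma 5.3 and Remark 5.1 of \cite{ratios}. From the structure of the inequality the paper does display (the display preceding \eqref{ineq2}, which reads $k\log|L(\tfrac12+\beta+it,\chi_D)|\ge \tfrac{2gk}{N+1}\log(1-q^{-(N+1)\beta})-k\sum_{\deg P\le N} a_\beta(P;N)\chi_D(P)|P|^{-1/2-\beta}+\cdots$), the mechanism in \cite{ratios} is clear: one proves a Soundararajan-type lower bound for $\log|L|$ valid for every $D$, in which the zero contribution is absorbed into the single aggregate term $\tfrac{2g}{N+1}\log(1-q^{-(N+1)\beta})$; the pointwise bound then follows by estimating the short Dirichlet polynomial trivially and taking $N$ of size roughly $2\log_q g$, at which point $-\tfrac{2g}{N+1}\log(1-q^{-(N+1)\beta})\le \tfrac{(1+\epsilon)g}{\log_q g}\log(1/\beta)$.

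Your expansion via $c_n=-\sum_j\alpha_j^n$, the two bounds $|c_n|\le q^n$ and $|c_n|\le 2gq^{n/2}$, the crossover at $K\approx2\log_q g$, and the estimate $\sum_{n\le K}\min(q^{n/2},2g)q^{-n\beta}/n\ll g/\log_q g$ are all correct, and the appeal to a Fej\'er/Beurling--Selberg-type weight is the right circle of ideas. The gap is in your treatment of the tail. You assert that ``at most $O(g/\log_q g)$ of the shifted zero-factors are close enough to $u$ to contribute non-trivially, each contributing at most $\log(1/\beta)$.'' This is a zero-density statement in a short arc, and it is not justified --- nor is it the way the argument works. There is no unconditional control on how many of the $2g$ angles $\theta_j$ can cluster near $t\log q$, and Jensen/Erd\H{o}s--Tur\'an type counts only give an error term of size $\asymp g$ in an arc, which is already as large as the number you need. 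The whole point of the Soundararajan-type majorization is to \emph{avoid} any zero-counting: one constructs a weight $w_n$ (the $a_\beta(P;N)$ above) so that, for \emph{every} $x$ on the circle $|x|=q^{-\beta}$, the pointwise inequality $-\log|1-x|\le -\Re\sum_{n\le N}w_n x^n/n+\tfrac{1}{N+1}\log\tfrac{1}{1-q^{-(N+1)\beta}}$ holds, uniformly in $\arg x$. Summing this over all $2g$ zeros gives the aggregate error $\tfrac{2g}{N+1}\log\tfrac{1}{1-q^{-(N+1)\beta}}$ regardless of where the zeros sit; then $N\approx 2\log_q g$ makes that aggregate $\le\tfrac{(1+\epsilon)g}{\log_q g}\log(1/\beta)$. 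In other words, the bound comes from ``all $2g$ zeros each contribute a small, uniform amount'' rather than ``few zeros each contribute $\log(1/\beta)$.'' Your intended mechanism inverts this, and as written it cannot be made rigorous; the majorization step you gesture at in the middle of the plan is in fact the entire content, and the subsequent ``zero count times $\log(1/\beta)$'' heuristic should be replaced by the uniform per-zero bound described above.

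A secondary point: you identify the main obstacle as sharpening the constant to $(1+\epsilon)$, but the obstacle in your sketch is earlier --- without the majorization producing a per-zero bound $\asymp\tfrac{1}{N}\log\tfrac{1}{N\beta}$, you would not get a bound of the correct order at all, only the trivial $(1-q^{-\beta})^{-2g}$, i.e.\ exponent $\asymp g\log(1/\beta)$, which is off by a factor of $\log_q g$.
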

\begin{proof}
See Lemma $5.3$ and Remark $5.1$ in \cite{ratios}.
\end{proof}

We also need the following estimates.

\begin{lem}\label{count}
For $f\in\mathcal{M}$ we have
\[
 \sum_{D \in \mathcal{H}_{2g+1}} \chi_D(f^2)=|\mathcal{H}_{2g+1}|  \prod_{P|f}\bigg(1+\frac{1}{|P|}\bigg)^{-1}+O_\varepsilon(  |f|^{\varepsilon}).
\]
\end{lem}
\begin{proof}
See, for example, Lemma 3.4 in \cite{hybrid}.
\end{proof}

\begin{lem}
\label{notpv}
For $f$ not a square polynomial, we have
$$ \Big| \sum_{D \in \mathcal{H}_{2g+1}} \chi_D(f)  \Big|\ll q^{g} |f|^{\epsilon}.$$
\end{lem}
\begin{proof}
See Lemma $3.5$ in \cite{hybrid}.
\end{proof}
\kommentar{We also need the following estimates.

\begin{lem}\label{count}
For $f\in\mathcal{M}$ we have
\[
\avg \sum_{D \in \mathcal{H}_{2g+1}} \chi_D(f^2)= \prod_{P|f}\bigg(1+\frac{1}{|P|}\bigg)^{-1}+O_\varepsilon( q^{-2g} |f|^{\varepsilon}).
\]
\end{lem}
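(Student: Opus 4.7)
The key initial simplification is that $\chi_D(f^2) = \chi_D(f)^2$ equals $\mathbf{1}_{\gcd(D,f)=1}$, since the quadratic residue symbol vanishes precisely when the arguments share a prime factor. Thus the sum counts monic square-free polynomials of degree $2g+1$ that are coprime to $f$, and it suffices to exhibit an accurate asymptotic for this count.

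The plan is to work from the generating-function identity
\[
\sum_{n \geq 0}\#\{D \in \mathcal{H}_n : \gcd(D,f)=1\}\, u^n \;=\; \prod_{P \nmid f}\bigl(1 + u^{\deg P}\bigr) \;=\; \frac{1 - q u^2}{(1 - qu)\prod_{P \mid f}\bigl(1 + u^{\deg P}\bigr)},
\]
where the first equality is the Euler product for the square-free indicator restricted away from primes dividing $f$, and the second uses the identity $\mathcal{Z}(u)/\mathcal{Z}(u^2) = (1-qu^2)/(1-qu)$. I would then extract the coefficient of $u^{2g+1}$ via contour integration. Pushing a small circular contour past the simple pole at $u = 1/q$ picks up the residue, which is exactly
\[
q^{2g}(q-1)\prod_{P \mid f}\bigl(1 + |P|^{-1}\bigr)^{-1} \;=\; |\mathcal{H}_{2g+1}|\prod_{P \mid f}\bigl(1 + |P|^{-1}\bigr)^{-1},
\]
producing the claimed main term after dividing by $|\mathcal{H}_{2g+1}|$.

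The remaining contour integral, taken along some $|u| = r$ with $1/q < r < 1$, gives the error. The only further poles lie at roots of $1 + u^{\deg P} = 0$, all of modulus $1$, so this contour avoids them. On it one has the pointwise bound $\prod_{P\mid f}\lvert 1 + u^{\deg P}\rvert^{-1} \leq (1 - r)^{-\omega(f)}$, hence the remainder is controlled by $r^{-(2g+1)}(1-r)^{-\omega(f)}$ up to a factor polynomial in $q$. A judicious choice of $r$ balances these, and the resulting factors of the form $C^{\omega(f)}$, $(1-r)^{-\omega(f)}$, and $\prod_{P\mid f}(1 \pm |P|^{-1})^{\pm 1}$ are absorbed into $|f|^{\varepsilon}$ because $\omega(f) \ll \log|f|/\log\log|f|$. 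This yields the asserted $O_{\varepsilon}(q^{-2g}|f|^{\varepsilon})$ on the averaged quantity.

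The main technical obstacle is the coalescence of poles that occurs when $f$ has several primes of the same degree, since then the factors $1+u^{\deg P}$ share zeros and the naive residue calculation must be replaced by a higher-order version. This is handled most cleanly by passing to the equivalent form
\[
F(u) \;=\; (1 - q u^2)\prod_{P \nmid f}\bigl(1 - u^{\deg P}\bigr)^{-1}\prod_{P\mid f}\bigl(1 - u^{2\deg P}\bigr)^{-1},
\]
obtained by applying $(1+u^d)^{-1}=(1-u^d)/(1-u^{2d})$ and then $\prod_P(1-u^{\deg P})=1-qu$. From this form the pole structure and the Euler-product interpretation as a zeta-like series are transparent, and the contour estimate proceeds uniformly. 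This is a standard calculation in the function-field analytic number theory of quadratic $L$-functions, and I would simply adapt earlier arguments of the author.
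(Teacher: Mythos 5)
The paper's proof of this lemma is by citation to Lemma 3.4 of \cite{hybrid}, which uses a M\"obius-sieve argument: one writes the square-free indicator as $\sum_{A^2\mid D}\mu(A)$, swaps the order of summation, and evaluates the resulting elementary counts. Your proposal instead goes through the Dirichlet series and contour integration, which is a legitimate alternative route and does identify the correct main term via the residue at $u=1/q$. However, there is a genuine gap in your error estimation.

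The problem is the claim that the integral over $|u|=r$, bounded by $r^{-(2g+1)}(1-r)^{-\omega(f)}$, can be made $O_\varepsilon(|f|^\varepsilon)$ by a judicious choice of $r$. The two factors pull in opposite directions in a way that your argument does not resolve. After dividing by $|\mathcal{H}_{2g+1}|=q^{2g}(q-1)$, the error becomes, up to constants,
\[
q^{-2g}\, r^{-(2g+1)}\,(1-r)^{-\omega(f)}.
\]
The factor $r^{-(2g+1)}$ forces $r$ to be close to $1$: for any fixed $r<1$ this is $r^{-2g}\gg 1$, so one loses a factor that is exponential in $g$ relative to the target $q^{-2g}$. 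Writing $r=1-\delta$ and minimizing $e^{(2g+1)\delta}\delta^{-\omega(f)}$ over $\delta\in(0,1-1/q)$ yields $\delta^\ast=\omega(f)/(2g+1)$ and a minimum of size $e^{\omega(f)}\big((2g+1)/\omega(f)\big)^{\omega(f)}$. For $f$ fixed with $\omega(f)\geq 1$ and $g\to\infty$ this grows at least like $g$, so it is not $O_\varepsilon(|f|^\varepsilon)$; the absorption step ``$C^{\omega(f)}\ll|f|^\varepsilon$ because $\omega(f)\ll\log|f|/\log\log|f|$'' is only valid when $C$ is a fixed constant, but here $C=(1-r)^{-1}=\delta^{-1}\asymp g/\omega(f)$ depends on $g$. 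As a concrete check, take $f=P_1P_2$ with $\deg P_1=\deg P_2=1$: then $\prod_{P\mid f}(1+u^{\deg P})^{-1}=(1+u)^{-2}$ has a double pole on $|u|=1$, and a direct residue (or coefficient) computation shows the error term in the averaged count is genuinely of size $\asymp g\,q^{-2g}$, which already exceeds what the stated contour bound would need to deliver but is also much smaller than what your optimization over $r$ actually produces.

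To make a contour argument work one must push past $|u|=1$ and account for the residues there (bounded uniformly in $g$ for simple poles, but contributing factors $\asymp g^{m-1}$ when $m$ primes dividing $f$ share a common degree), or else avoid the contour method entirely and use the sieve identity as in the cited source: write $\sum_{D\in\mathcal{H}_{2g+1}}\chi_D(f^2)=\sum_{A\in\mathcal{M}_{\leq g},\,(A,f)=1}\mu(A)\,\#\{B\in\mathcal{M}_{2g+1-2\deg A}:(B,f)=1\}$ and evaluate the inner count exactly via $\#\{B\in\mathcal{M}_n:(B,f)=1\}=\sum_{E\mid f^\flat,\ \deg E\leq n}\mu(E)q^{n-\deg E}$. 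Your last paragraph rewriting $F(u)$ in the equivalent product form does not change the location or order of the poles and so does not address this issue; the coalescence of poles is precisely what produces the $g^{m-1}$ factors, and no algebraic reshuffling removes them.
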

\begin{proof}
See, for example, Lemma 3.4 in \cite{hybrid}.
\end{proof}

\begin{lem}
\label{notpv}
For $f$ not a square polynomial, we have
$$ \Big|\avg  \sum_{D \in \mathcal{H}_{2g+1}} \chi_D(f)  \Big|\ll q^{-g} |f|^{\epsilon}.$$
\end{lem}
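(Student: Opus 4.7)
The plan is to mirror the Möbius-unfolding strategy that underlies Lemma~\ref{count}. First, I would detect the squarefree condition via $\mathbf{1}_{D\text{ squarefree}}=\sum_{A^{2}\mid D}\mu(A)$ and use the multiplicativity of the Jacobi symbol in its upper entry (so that $\chi_{A^{2}E}(f)=\chi_{A^{2}}(f)\chi_{E}(f)=\mathbf{1}_{(A,f)=1}\,\chi_{E}(f)$) to rewrite
\[
\sum_{D\in\mathcal{H}_{2g+1}}\chi_{D}(f)
=\sum_{\substack{A\in\mathcal{M}_{\leq g}\\ (A,f)=1}}\mu(A)
\sum_{E\in\mathcal{M}_{2g+1-2\deg A}}\chi_{E}(f).
\]

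Next, I would analyze the inner sum $S(n,f):=\sum_{E\in\mathcal{M}_{n}}\chi_{E}(f)$. Because $f$ is not a square, the map $E\mapsto\bigl(\tfrac{E}{f}\bigr)$ is a non-trivial Dirichlet character modulo $f$, so orthogonality of characters over residues in $\mathbb{F}_{q}[x]/f$ forces $S(n,f)=0$ for every $n\geq\deg f$; indeed, for such $n$ the set $\mathcal{M}_{n}$ breaks into $q^{n-\deg f}$ complete residue systems modulo $f$, on each of which the non-trivial character sums to zero. In the remaining range $n<\deg f$, I would invoke the Poisson summation formula for quadratic characters over $\mathbb{F}_{q}[x]$ together with the Riemann hypothesis for the associated $L$-function $L(u,\tilde{\chi}_{f})=\sum_{n}S(n,f)u^{n}$, which is a polynomial of degree at most $\deg f-1$ with zeros on $|u|=q^{-1/2}$. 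This yields the power-saving estimate $|S(n,f)|\ll q^{n/2}|f|^{\epsilon}$.

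Finally, since the inner sum vanishes unless $\deg A>(2g+1-\deg f)/2$, the outer sum is supported on $O(\log_{q}|f|)$ levels of $\deg A$, each carrying at most $q^{\deg A}$ admissible $A$. Summing trivially with $|\mu(A)|\leq 1$ gives
\[
\Big|\sum_{D\in\mathcal{H}_{2g+1}}\chi_{D}(f)\Big|
\ll\sum_{k}q^{k}\cdot q^{(2g+1-2k)/2}\,|f|^{\epsilon}
\ll q^{g+1/2}\cdot\log_{q}|f|\cdot|f|^{\epsilon}
\ll q^{g}\,|f|^{2\epsilon},
\]
and dividing by $|\mathcal{H}_{2g+1}|\asymp q^{2g+1}$ followed by a harmless relabelling of $\epsilon$ yields the claimed bound.

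The principal obstacle is establishing $|S(n,f)|\ll q^{n/2}|f|^{\epsilon}$ for $n<\deg f$: a direct application of the Weil bound to $L(u,\tilde{\chi}_{f})$ only produces $|S(n,f)|\leq\binom{\deg f-1}{n}q^{n/2}$, which wastes a genuine power of $|f|$. The fix is to use Poisson summation/the functional equation to replace the length-$q^{n}$ sum by a dual sum of length $\approx\deg f-n$, and then apply the Weil bound to this shorter sum to extract the $|f|^{\epsilon}$ saving.
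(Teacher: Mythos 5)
Your opening reduction is sound and likely parallels the reference: expanding $\mathbf{1}_{D\text{ squarefree}}=\sum_{A^2\mid D}\mu(A)$, using multiplicativity of the Jacobi symbol in the numerator to extract the coprimality condition, and observing that $S(n,f):=\sum_{E\in\mathcal{M}_n}\bigl(\tfrac{E}{f}\bigr)$ vanishes for $n\geq\deg f$ because $E\mapsto\bigl(\tfrac{E}{f}\bigr)$ is then a complete sum of a nontrivial character — all of this is correct. The problem lies entirely in the step $|S(n,f)|\ll q^{n/2}|f|^{\epsilon}$ for $n<\deg f$, which is where the whole weight of the argument rests. The functional equation for the polynomial $L(u,\tilde\chi_f)=\sum_n S(n,f)u^n$ of degree $d\leq\deg f-1$ reads $S(n,f)=W\,q^{\,n-d/2}\,\overline{S(d-n,f)}$, so dualizing and applying Weil to the short dual sum gives $|S(n,f)|\leq q^{n-d/2}\binom{d}{d-n}q^{(d-n)/2}=\binom{d}{n}q^{n/2}$ — exactly the same binomial coefficient you set out to remove. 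Applying the trivial bound to the dual sum instead gives $|S(n,f)|\leq q^{d/2}$, which after summing $\sum_n q^{-n/2}|S(n,f)|$ produces $q^{g}|f|^{1/4+o(1)}$ rather than $q^{g}|f|^{\epsilon}$. The bound you need, $|S(n,f)|\ll q^{n/2}|f|^{\epsilon}$ uniformly for all $n<\deg f$, is square-root cancellation in a character sum of length $q^n$ with modulus of size $q^{\deg f}$; this is a Lindel\"of/Burgess-type statement that the Riemann Hypothesis for curves does not imply, and no form of Poisson summation over $\mathbb{F}_q[x]$ yields it. (Even the P\'olya--Vinogradov analogue only gives $\ll|f|^{1/2+\epsilon}$ independent of $n$.) So there is a genuine gap; the estimate you invoke is unproven and, as far as is known, out of reach.

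To see why the fix cannot work as stated, note that the Weil bound and the functional equation are mutually compatible: the binomial factors $\binom{d}{n}$ are symmetric under $n\mapsto d-n$, so passing to the dual sum does not shorten the effective length in the sense that matters. The reference's own treatment (Lemma 3.5 of \cite{hybrid}) does not proceed term-by-term through $S(n,f)$; instead it works globally, writing $\sum_{D\in\mathcal{H}_{2g+1}}\chi_D(f)$ via the Euler-product identity
\[
\sum_{D\text{ sqfree monic}}\chi_D(f)\,u^{\deg D}=\frac{(1-qu^2)\,L(u,\tilde\chi_f)}{\prod_{P\mid f}(1-u^{2\deg P})},
\]
extracting the $u^{2g+1}$ coefficient by Perron/Cauchy on a circle $|u|=r$, and controlling the full integrand at once rather than individual Taylor coefficients of $L$. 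If you wish to salvage your outline, the clean path is to leave $L(u,\tilde\chi_f)$ unexpanded, choose the contour radius carefully, and absorb the loss from the Euler factors at $P\mid f$ into the $|f|^\epsilon$ via $\omega(f)\ll\deg f/\log\deg f$; the coefficient-level bound you assert simply is not available.
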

\begin{proof}
See Lemma $3.5$ in \cite{hybrid}.
\end{proof}}
Throughout the paper, we will frequently use the following analogue of Perron's formula in function fields. If the power series $\sum_{n=0}^{\infty} a(n) u^n$ is absolutely convergent in $|z| \leq r <1$, then
\begin{equation}
 \sum_{n \leq N} a(n) = \frac{1}{2 \pi i} \oint_{|u|=r} \Big( \sum_{n=0}^{\infty} a(n)u^n) \Big) \frac{du}{(1-u)u^{N+1}}. 
 \label{perron}
\end{equation}

Now let $t \in \mathbb{R}$ and $\ell$ be an even integer. Let
$$E_{\ell}(t) = \sum_{s \leq \ell} \frac{t^s}{s!}.$$
Note that we have $E_{\ell}(t) >0$ for any $t$ since $\ell$ is even. We will  use the fact that  for $t \leq \ell/e^2$, we have 
\begin{equation}
e^t \leq (1+e^{-\ell/2}) E_{\ell}(t).
\label{taylor}
\end{equation}
For a proof, see, for example, \cite{maks_sound}.

Let $\nu(f)$ be the multiplicative function given by $$\nu(P^a) = \frac{1}{a!}.$$ Let $\Omega(f)$ denote the number of prime factors of $f$, counting multiplicity. We will use the following result (see Lemma $3.2$ in \cite{DFL}).

\begin{lem}\label{power}
Let $a(f)$ be a completely multiplicative function
. Then for any interval $I$ and any $s\in\mathbb{N}$ we have that
\begin{equation*}
\bigg( \sum_{\deg(P) \in I} a(P)  \bigg)^s = s! \sum_{\substack{P | f \Rightarrow \deg(P) \in I \\ \Omega(f) = s}} a(f) \nu(f).
\end{equation*}
\end{lem}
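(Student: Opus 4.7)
My plan is to prove Lemma \ref{power} by a direct multinomial expansion combined with unique factorization in $\mathbb{F}_q[x]$. The starting point is to expand the $s$th power as
\[
\bigg( \sum_{\deg(P) \in I} a(P) \bigg)^s = \sum_{(P_1,\ldots,P_s)} a(P_1) a(P_2) \cdots a(P_s),
\]
where the outer sum is over ordered $s$-tuples of monic irreducibles $P_i$ with each $\deg(P_i) \in I$. The idea is then to re-group these ordered tuples according to the monic polynomial $f = P_1 P_2 \cdots P_s$ they produce, which, thanks to unique factorization, determines the multiset $\{P_1,\ldots,P_s\}$ and vice versa.

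Next I would make the grouping explicit. Fix a monic $f$ with $\Omega(f)=s$ whose prime divisors all satisfy $\deg(P)\in I$, and write $f=Q_1^{a_1}\cdots Q_r^{a_r}$ with distinct monic irreducibles $Q_j$ and $a_1+\cdots+a_r=s$. The number of ordered $s$-tuples $(P_1,\ldots,P_s)$ whose product equals $f$ is the multinomial coefficient $\binom{s}{a_1,\ldots,a_r}=\frac{s!}{a_1!\cdots a_r!}$, and for each such tuple, complete multiplicativity of $a$ gives $a(P_1)\cdots a(P_s)=a(Q_1)^{a_1}\cdots a(Q_r)^{a_r}=a(f)$. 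Hence the total contribution of $f$ to the expansion equals
\[
\frac{s!}{a_1!\cdots a_r!}\,a(f) \;=\; s!\,\nu(f)\,a(f),
\]
since $\nu$ is multiplicative with $\nu(Q_j^{a_j})=1/a_j!$. Summing over all admissible $f$ yields the right-hand side of the lemma.

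The only potentially delicate points are (i) justifying the rearrangement when the interval $I$ is infinite, which is fine as a formal identity and also holds analytically in every application within this paper because the relevant sums converge absolutely; and (ii) the trivial edge cases $s=0$ (both sides equal $1$, using the empty product convention) and $I=\emptyset$ (both sides vanish for $s\geq 1$). I do not expect any real obstacle here: the lemma is essentially the multinomial theorem repackaged through unique factorization in $\mathbb{F}_q[x]$, with the factor $\nu(f)$ absorbing precisely the multinomial denominators.
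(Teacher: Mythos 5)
Your proof is correct, and the paper itself does not prove this lemma: it simply cites Lemma 3.2 of \cite{DFL}. Your argument — expanding the $s$th power as a sum over ordered $s$-tuples of irreducibles, grouping tuples by their product $f$ via unique factorization, counting each group by the multinomial coefficient $s!/(a_1!\cdots a_r!)$, and observing that complete multiplicativity gives $a(P_1)\cdots a(P_s)=a(f)$ while $\nu(f)=\prod_j 1/a_j!$ exactly cancels the multinomial denominator — is the standard and essentially the only natural proof, and is what the cited reference gives.
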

We will also need the following form of the quadratic large sieve over function fields \cite{f_s}.
\begin{lem}
\label{large_sieve}
Let $a(f)$ be arbitrary complex numbers supported on monic polynomials, and let $n=O(g)$. We have 
$$\sum_{D \in \mathcal{H}_{2g+1}} \Big| \sum_{f \in \mathcal{M}_n} a(f) \chi_D(f)\Big|^2 \leq (q^{2g}+q^n) q^{A g/(\log g)^{1/4}} \sum_{f_1f_2= \square} |a(f_1)a(f_2)|,$$ for some absolute constant $A>0$.
\end{lem}

We note that the result in \cite{f_s} could be improved to obtain a better bound than $q^{Ag/(\log g)^{1/4}}$, but for the purpose of this paper, Lemma \ref{large_sieve} above is enough.
 
\section{Setup of the proof and initial lemmas}
We will first introduce some of the ideas in the proof of Theorem \ref{theorem_ub}, and will state some key lemmas. We will return to the proof of Theorem \ref{theorem_ub} in section \ref{proof_mainthm}.

Let 
$$I_{0} = (0,N_{0}] , I_{1}=(N_{0},N_{1}], \ldots, I_{K} = (N_{K-1},N_{K}],$$ where $N_{j}$ are parameters we will choose later. Also let $s_{j}$ and $\ell_{j}$ be even integers which we will choose later on. For now, we can think of $s_j N_j \asymp g$ and $\sum_{h=0}^K \ell_h N_h \ll g$. 

\kommentar{To prove Theorem \ref{theorem_ub}, we will use an inductive approach. The first step of the induction is somewhat similar to the proof of Theorem $1.3$ in \cite{ratios}. 
At the first step of the induction, we divide the primes into $K_1+1$ intervals depending on their degree, and denote the intervals by $I_{1,j}$ for $0 \leq j \leq K_1$. 
Let 
$$I_{1,0} = (0,N_{1,0}] , I_{1,1}=(N_{1,0},N_{1,1}], \ldots, I_{1,K_1} = (N_{1,K_1-1},N_{1,K_1}],$$ where
\begin{equation}
N_{1,0} = \Big[\frac{\log_q g (2k \alpha-\frac{a(d-1/2)}{r}+2d-1)}{k \alpha+\epsilon} \Big], \, N_{1,j} =[r(N_{1,j-1}+1)],\label{nj}
\end{equation} for some parameters $r>1$, $1/2<d<1$ and $a<2$ to be chosen below, and where $\alpha= \log(1/\beta) / \log g$.

 Also let
\begin{equation} \label{s0}
s_{1,0} =2 \Big[ \frac{g( k \alpha +\epsilon)}{\log_q g (2k\alpha-\frac{a(d-1/2)}{r}+2d-1)} \Big],\qquad \ell_{1,0}=2 \Big[\frac{s_{1,0}^d}{2}\Big], \end{equation} and for $1 \leq j \leq K_1$, let
\begin{equation}
 s_{1,j} = 2 \Big[ \frac{ag}{2N_{1,j}} \Big], \, \ell_{1,j} = 2 \Big[ \frac{s_{1,j}^d}{2}\Big].
 \label{sj}
  \end{equation}
 We will choose the parameters $r,a,d$ such that
  \begin{equation}
  \frac{a(d-1/2)}{r} =1-\epsilon/2 .
  \label{choice}
  \end{equation} 
  In particular, we can choose for example
  \begin{equation}
  \label{rad}
  a=2(1-(\epsilon/2)^3), \, d = \frac{2+\epsilon/2}{2+\epsilon}, \, r = 1+ \frac{(\epsilon/2)^2}{1+\epsilon/2}.
  \end{equation}
  We choose $K_1$ such that
  \begin{equation}
  N_{1,K_1} =[c g],
  \label{nk}
  \end{equation}
  for some constant $c>0$ which satisfies 
  \begin{equation}
  c^{1-d} \leq \frac{(2-a-\epsilon^4)(r^{1-d}-1)}{a^d}.
  \label{c_ct}
  \end{equation} The condition above ensures that 
  \begin{equation}\sum_{h=0}^{K_1} \ell_{1,h} N_{1,h} \leq 2g,\label{cond1ub}
  \end{equation} and
  \begin{equation}\label{cond2ub}
  \sum_{h=0}^j \ell_{1,h} N_{1,h}+ s_{1,j+1}N_{1,j+1} \leq 2g,
  \end{equation} for $0 \leq j<K_1.$
  
  Let 
  \begin{equation}
  v_k= 2k\alpha - \frac{a(d-\frac{1}{2})}{r}+2d-1 , \, y_k= 1-\frac{d-\frac{1}{2}}{v_k}.
  \label{y}
  \end{equation}
  At the first step if the inductive argument, we will show that
  $$\frac{1}{|\mathcal{H}_{2g+1}|} \sum_{D \in \mathcal{H}_{2g+1}} \frac{1}{ \Big|L \Big(\frac{1}{2}+\beta+it,\chi_D \Big) \Big|^k} \ll q^{c_1 g k \alpha y_k},$$ for some explicit constant $c_1$. We will suppose that at step $m-1$, we have an upper bound of the form 
  $$ \frac{1}{|\mathcal{H}_{2g+1}|} \sum_{D \in \mathcal{H}_{2g+1}} \frac{1}{ \Big|L \Big(\frac{1}{2}+\beta+it,\chi_D \Big) \Big|^k} \ll q^{ c_{m-1} g k \alpha \prod_{j=0}^{m-2} y_{2^j k}}. $$

 \kommentar{ If $\beta \gg 1/g$, at step $m$ of the induction, we choose $N_{m,0}$ to be the integer part of the solution to 
\begin{equation}
 \frac{\log(g/x)}{x} \Big( 2k \alpha- \frac{a(d-1/2)}{r}+d-\frac{1}{2} \Big)= (\log q) k \alpha c_{m-1} y^{m-1}.
 \label{solution_nm}
 \end{equation}
 \acom{Justify why it exists.}
 If $\beta=o(1/g)$, we pick $N_{m,0}$ to be the integer part of the solution to \begin{equation}
 \frac{\log g }{x} \Big( 2k \alpha- \frac{a(d-1/2)}{r}+d-\frac{1}{2} \Big)- \frac{\log x}{x} \Big( 2k - \frac{a(d-1/2)}{r}+d-\frac{1}{2} \Big) = (\log q) k \alpha c_{m-1} y^{m-1}.
 \label{solution_nm'}
 \end{equation}
 Note that if $2k - \rat+d-\frac{1}{2}\geq 0$, then such a solution is unique and $\ll g^v$, where $v= (2k\alpha-\rat+d-1/2)/(2k-\rat+d-1/2)$, and if $2k-\rat+d-\frac{1}{2} < 0$, the solution is unique and $\gg  g^v$.
}
Let 
$$I_{m,0} = (0,N_{m,0}] , I_{m,1}=(N_{1,0},N_{1,1}], \ldots, I_{m,K_m} = (N_{m,K_m-1},N_{m,K_m}],$$ where
\begin{equation}
N_{m,0} = \Big[\frac{\log_q g (2k \alpha-\frac{a(d-1/2)}{r}+2d-1)}{k \alpha c_{m-1} \prod_{j=0}^{m-2} y^{2^jk}} \Big], \, N_{m,j} =[r(N_{m,j-1}+1)],\label{nj}
\end{equation}
 We also let
 \begin{equation}
 s_{m,0} =  \Big[ \frac{2g}{ N_{m,0}} \Big], \ell_{m,0} =2 [s_{m,0}^d/2],
 \label{sm}
 \end{equation} and for $1 \leq j \leq K_m$, let
$$ s_{m,j} =2 \Big[ \frac{ag}{2 N_{m,j}} \Big] , \, \ell_{m,j} = 2 [s_{m,j}^d/2].$$ We choose $N_{m,K_m} =[cg] $ as in \eqref{nk}. Then with these choices of parameters, we still have
$$\sum_{j=0}^{K_m} \ell_{m,j} N_{m,j} \leq 2g,$$ and
$$ \sum_{r=0}^j \ell_{m,r} N_{m,r}+s_{m,j+1} N_{m,j+1} \leq 2g,$$ for $0 \leq j <K_m$.
}
 Let 
 \begin{align*}
 a_{\beta}(P;N) &=-\cos(t \deg(P) \log q)\sum_{j=0}^{\infty}   \Big(\frac{(j+1)\deg(P) q^{-j(N+1)\beta}}{\deg(P)+j(N+1)} \\
 &- \frac{(j+1) \deg(P) q^{-(j+2)(N+1)\beta}|P|^{2\beta}}{(j+2)(N+1)-\deg(P)} \Big) . 
 \end{align*}
 We extend $a_{\beta}(P;N)$ to a completely multiplicative function in the first variable.
As in \cite{ratios}, for $\deg(P) \leq N$, we have
\begin{equation}
| a_{\beta} (P;N)| \leq 1+ \frac{1}{q^{(N+1)\beta}-1},
\label{ineq1}
\end{equation} if $N \beta \gg 1$, and 
\begin{equation}
|a_{\beta}(P;N)| \leq \Big( \frac{1}{2}+\epsilon \Big)\Big(  \log  \frac{1}{N\beta} \Big)
\label{nb}
\end{equation} if $N \beta =o( 1)$. Note that in \cite{ratios} we used the weaker bound $|a_{\beta}(P;N) \ll \log \Big( \frac{1}{\beta} \Big)$ which was enough for our purposes, but the stronger bound above easily follows from \cite{ratios}; see the equation before $(5.20)$.

We rewrite \eqref{ineq1} and \eqref{nb} into a single inequality as  
\begin{equation}
|a_{\beta}(P;N)| \leq B(N) \Big(\log \frac{1}{N \beta} \Big)^{\gamma(N)},\label{single}
\end{equation}
where $\gamma(N)=1$ if $N \beta=o(1)$ and $\gamma(N)=0$ if $N \beta \gg 1$, and where
\begin{equation}
\label{b*}
B(N) = 
\begin{cases}
\frac{1}{2}+\epsilon & \mbox{ if } N \beta=o(1) \\
1+\frac{1}{q^{(N+1)\beta)}-1} & \mbox{ if } N \beta \gg 1.
\end{cases}
\end{equation}
It then follows that
\begin{equation}
\label{combined}
|a_{\beta}(f;N) \leq B(N)^{\Omega(f)}  \Big(\log \frac{1}{N \beta} \Big)^{\gamma(N) \Omega(f)}.
\end{equation}
 For $0 \leq  j,h \leq K$, let
 $$P_{I_{h}}(D;N_{j}) = \sum_{\deg(P) \in I_{h}} \frac{a_{\beta}(P;N_{j}) \chi_D(P)}{|P|^{1/2+\beta}}.$$
 
 Similarly as in \cite{ratios}, for $h \leq K$,  let 
$$\mathcal{T}_{h} =  \Big\{ D \in \mathcal{H}_{2g+1} \, | \,   \max_{h \leq u \leq K} \big| P_{I_{h}} ( D; N_{u}) \big|  \leq \frac{\ell_{h}}{ke^2} \Big\}.$$
A minor modification of Lemma $5.4$ in \cite{ratios} gives the following lemma:
\begin{lem}
\label{lem_initial}
We either have
$$ \max_{0 \leq u \leq K}  \Big|  P_{I_{0}} (D; N_{u}) \Big| > \frac{\ell_{0}}{ke^2},$$ or 
\begin{align}
\label{ineq211}
 \frac{1}{  | L(1/2+\beta+it,\chi_D)|^k} \leq \exp(O(k)) (S_{1}(D)+S_{2}(D)),
\end{align}
where 
$$S_{1}(D) =  \Big(1-q^{-(N_{K}+1)\beta} \Big)^{-\frac{2gk}{N_{K}+1}}  (N_{K} \log g)^{k/2} \prod_{h=0}^{K} (1+e^{-\ell_{h}/2}) E_{\ell_{h}} \Big(k P_{I_{h}}(D;N_{K})\Big),$$ and
\begin{align*}
S_{2}(D) = (\log g)^{k/2} & \sum_{0 \leq j \leq K-1} \sum_{j < u \leq K} \Big(1-q^{-(N_{j}+1)\beta} \Big)^{-\frac{2gk}{N_{j}+1}} N_{j}^{k/2}\\
& \times \prod_{h=0}^j (1+e^{-\ell_{h}/2}) E_{\ell_{h}} \Big( k P_{I_{h}} (D;N_{j})\Big)  \Big(\frac{ke^2}{\ell_{j+1}} P_{I_{j+1}} (D;N_{u}) \Big)^{s_{j+1}}.
\end{align*}

\end{lem}
\begin{proof}
To obtain \eqref{ineq211}, note that if $D \notin \mathcal{T}_{0}$, then either $D \in \mathcal{T}_{h}$ for all $h \leq K$ or there exists some $0 \leq j \leq K-1$ such that $D \in \mathcal{T}_{h}$ for all $h \leq j$, but $D \notin \mathcal{T}_{j+1}$. If $D \in \mathcal{T}_{h}$ for all $h \leq K$, then following the proof of Lemma $5.4$ in \cite{ratios}, we have 
 \begin{align*}
&  k \log |L(\tfrac12+\beta+it,\chi_D)|  \geq \frac{2gk}{N_{K}+1} \log \Big(1-q^{-(N_{K}+1)\beta} \Big) \\
&\qquad\qquad- k\sum_{\deg(P) \leq N_{K}} \frac{  a_{\beta}(P;N_{K}) \chi_D(P)}{|P|^{1/2+\beta}} + \frac{k}{2}  \sum_{\substack{\deg(P) \leq N_{K}/2 \\ P \nmid D}} \frac{ \cos(2t  \deg(P) \log q)}{|P|^{1+2\beta}} +O(1) .
\end{align*} 
Now we use the fact that 
$$\sum_{P|D} \frac{1}{|P|} \leq \log\log g+O(1),$$ bound $\cos(2t\deg(P) \log q) \geq -1$ and use the Prime Polynomial Theorem \eqref{ppt} to get that
\begin{align*}
   k \log |L(\tfrac12+\beta+it,\chi_D)| &  \geq \frac{2gk}{N_{K}+1} \log \Big(1-q^{-(N_{K}+1)\beta} \Big)- k\sum_{\deg(P) \leq N_{K}} \frac{  a_{\beta}(P;N_{K}) \chi_D(P)}{|P|^{1/2+\beta}} \\
&-\frac{k}{2} \log N_{K}- \frac{k}{2} \log \log g+O(k).
\end{align*}

 We exponentiate the expression above and use inequality \eqref{taylor}. Since $D \in \mathcal{T}_{h}$ for all $h \leq K$, we obtain the first term in \eqref{ineq211}. We similarly obtain the second term in \eqref{ineq211}, if  $D \in \mathcal{T}_{h}$ for all $h \leq j$, but $D \notin \mathcal{T}_{j+1}$ for some $j \leq K-1$.
\end{proof}

We will also need the following key lemmas.
\begin{lem}
\label{sum_primes}
For $N_{0} s_{0} \leq  4g$ and $s_0$ even, we have
$$\sum_{D \in \mathcal{H}_{2g+1}} (P_{I_{0}}(D; N_{u}))^{s_{0}} \leq q^{2g+A g/(\log g)^{1/4}}  (s_{0}/2)! 2^{s_{0}}  (eB(N_0))^{s_{0}}  \Big( \log \frac{1}{N_{0} \beta} \Big)^{\gamma(N_{0})s_{0}} (\log N_{0})^{s_{0}/2},$$ for $A>0$ an absolute constant.
\end{lem}
\begin{proof}
Using Lemma \ref{power}, we have
\begin{align}
 \sum_{D \in \mathcal{H}_{2g+1}}  P_{I_{0}}(D;N_{u}) ^{ s_{0}} = (s_{0}/2)!)^2  \sum_{D \in \mathcal{H}_{2g+1}} \Big( \sum_{\substack{P | f \Rightarrow \deg(P) \in I_{0} \\ \Omega(f) =  s_{0}/2}} \frac{a_{\beta}(f;N_{u}) \nu(f)\chi_D(f)  }{|f|^{1/2+\beta}} \Big)^2.\label{first_bound}
\end{align}
Using the large sieve in Lemma \ref{large_sieve}, we have 
\begin{align}
\sum_{D \in \mathcal{H}_{2g+1}} & \Big( \sum_{\substack{P | f \Rightarrow \deg(P) \in I_{0} \\ \Omega(f) =  s_{0}/2}} \frac{a_{\beta}(f;N_{u}) \nu(f)\chi_D(f)  }{|f|^{1/2+\beta}} \Big)^2 \leq (q^{2g}+q^{\frac{N_{0}s_{0}}{2}})q^{A g/(\log g)^{1/4}} \nonumber \\
& \times \sum_{\substack{P|f_1 f_2 \Rightarrow \deg(P) \in I_{0} \\ \Omega(f_1) = \Omega(f_2) = s_{0}/2 \\ f_1 f_2= \square}} \frac{|a_{\beta}(f_1;N_{u}) a_{\beta}(f_2;N_{u})| \nu(f_1) \nu(f_2)}{|f_1f_2|^{1/2+\beta}}. \label{to_bd}
\end{align}
We rewrite the condition $f_1f_2=\square$ as $f_1=DA^2$ and $f_2=DB^2$ with $(A,B)=1$. Since we are looking for an upper bound for \eqref{to_bd}, we remove the coprimality condition, and we use the bound $\nu(DA^2) \nu(DB^2) \leq \nu(D) \nu(A) \nu(B)$. Then
\begin{align*}
\eqref{to_bd} & \leq  (q^{2g}+q^{\frac{N_{0}s_{0}}{2}})q^{A g/(\log g)^{1/4}} \sum_{\substack{ P|D \Rightarrow \deg(P) \in I_{0} \\ \Omega(D) \leq s_{0}/2\\ \Omega(D) \equiv s_0/2 \pmod 2}} \frac{ |a_{\beta}(D;N_{u})|^2 \nu(D) }{|D|^{1+2\beta}}  \Big( \sum_{\substack{P|A \Rightarrow P \in I_{0} \\ \Omega(A) = (s_{0}/2-\Omega(D))/2}} \frac{ |a_{\beta}(A;N_{u})|^2 \nu(A)}{|A|^{1+2\beta}} \Big)^2\\
& \leq (q^{2g}+q^{\frac{N_{0}s_{0}}{2}})q^{A g/(\log g)^{1/4}} \sum_{\substack{ P|D \Rightarrow \deg(P) \in I_{0} \\ \Omega(D) \leq s_{0}/2 \\ \Omega(D) \equiv s_{0}/2 \pmod 2}} \frac{ |a_{\beta}(D;N_{u})|^2 \nu(D) }{|D|^{1+2\beta}} \frac{1}{ ((s_{0}/2-\Omega(D))/2)!)^2} \\
& \times  \Big( \sum_{P \in I_{0}} \frac{ |a_{\beta}(P;N_{u})|^2}{|P|^{1+2\beta}} \Big)^{\frac{s_{0}}{2}-\Omega(D)}.
\end{align*}
Arranging the polynomials $D$ according to $\Omega(D)$ and using Lemma \ref{power}, we further get that 
\begin{align}
\eqref{to_bd} & \leq  (q^{2g}+q^{\frac{N_{0}s_{0}}{2}}) q^{A g/(\log g)^{1/4}} \Big( \sum_{P \in I_{0}} \frac{ |a_{\beta}(P;N_{u})|^2}{|P|^{1+2\beta}} \Big)^{s_0/2} \sum_{\substack{j=1 \\ j \equiv s_{0}/2 \pmod 2}}^{s_{0}/2} \frac{1}{j!  ((s_{0}/2-j)/2)!)^2} . \label{binom}
\end{align}
Now repeatedly using Stirling's approximation, we have
\begin{align}
\sum_{\substack{j=1 \\ j \equiv s_{0}/2 \pmod 2}}^{s_{0}/2} \frac{1}{j!  ((s_{0}/2-k)/2)!)^2} & \ll \frac{2^{s_{0}/2}}{\sqrt{s_{0}} (s_{0}/2)!} \sum_{j=0}^{\frac{s_0/2-\alpha}{2}} \binom{(s_0/2-\alpha)/2}{j}^2 \ll \frac{2^{s_{0}/2}}{\sqrt{s_{0}} (s_{0}/2)!}  \binom{s_{0}/2-\alpha}{(s_0/2-\alpha)/2} \nonumber  \\
& \ll  \frac{2^{s_{0}}}{ (s_{0}/2)!},\label{bd_binom}
\end{align} 
where in the equation above $\alpha = s_0/2 \pmod 2 \in \{0,1\}$, and where the implied constant is absolute and does not depend on $k$.

Now in equation \eqref{binom}, we use the Prime Polynomial Theorem \eqref{ppt_ineq} for the sum over $P \in  I_0$ and the bound \eqref{single}, and we have
\begin{align*}
\Big( \sum_{P \in I_{0}} &\frac{|a_{\beta}(P;N_u)|^2}{|P|^{1+2\beta}} \Big)^{s_0/2} \leq B(N_u)^{s_0} \Big( \log \frac{1}{N_u \beta} \Big)^{\gamma(N_u) s_0} \Big(\sum_{n=1}^{N_0} \frac{1}{n} \Big)^{s_0/2} \\
& \leq B(N_u)^{s_0} \Big( \log \frac{1}{N_u \beta} \Big)^{\gamma(N_u) s_0} (\log N_0 + 2\gamma)^{s_0/2} \leq  B(N_u)^{s_0} \Big( \log \frac{1}{N_u \beta} \Big)^{\gamma(N_u) s_0}( \log N_0)^{s_0/2} e^{s_0}.
\end{align*}
We further use the fact that
$$B(N_u) \leq B(N_0),$$ and that
$$ \Big( \log \frac{1}{N_u \beta} \Big)^{\gamma(N_u) s_0} \leq \Big( \log \frac{1}{N_0 \beta} \Big)^{\gamma(N_0) s_0}.$$
Combining the two equations above, \eqref{first_bound}, \eqref{binom}, we get (after a possible relabeling of the absolute constant $A$):
\begin{align*}
 \sum_{D \in \mathcal{H}_{2g+1}} & P_{I_{0}}(D;N_{u}) ^{ s_{0}}  \leq (s_{0}/2)! 2^{s_{0}} (q^{2g}+q^{\frac{N_{0}s_{0}}{2}})q^{A g/(\log g)^{1/4}}  (eB(N_0))^{s_{0}}  \Big( \log \frac{1}{N_{0} \beta} \Big)^{\gamma(N_{0})s_{0}} (\log N_{0})^{s_{0}/2}.
\end{align*}
Since $N_{0} s_{0} \leq 4g$, the conclusion follows.
\end{proof}
We also have the following variant of the lemma above, which removes the $q^{Ag/(\log g)^{1/4}}$ term introduced by the use of the large sieve inequality, at the expense of having to choose a shorter Dirichlet polynomial. 

\begin{lem}
\label{variant1}
For $N_0s_0 \leq 2g$ and $s_0$ even, we have
$$\sum_{D \in \mathcal{H}_{2g+1}} P_{I_0}(D;N_u)^{s_0} \leq q^{2g+1} \frac{(s_0)!}{(s_0/2)! 2^{s_0/2}} (eB(N_0))^{s_0} \Big( \log \frac{1}{N_0 \beta} \Big)^{\gamma(N_0)s_0} (\log N_0)^{s_0/2}.$$
\end{lem}
\begin{proof}
The proof is a simplification of the previous proof. Since $P_{I_0}(D;N_u)^{s_0}>0$, we have
\begin{align*}
\sum_{D \in \mathcal{H}_{2g+1}} P_{I_0}(D;N_u)^{s_0} \leq \sum_{D \in \mathcal{M}_{2g+1}} P_{I_0}(D;N_u)^{s_0} = (s_0)! \sum_{D \in \mathcal{M}_{2g+1}} \sum_{\substack{P | f \Rightarrow \deg(P) \in I_{0} \\ \Omega(f) =  s_{0}}} \frac{a_{\beta}(f;N_{u}) \nu(f)\chi_D(f)  }{|f|^{1/2+\beta}}.
\end{align*}
We interchange the sums over $D$ and $f$, and note that since $\Omega(f) = s_0$, we have $\deg(f) \leq N_0 s_0\leq 2g$. Hence, if $f \neq \square$, we have $\sum_{D \in \mathcal{M}_{2g+1}} \chi_D(f) =0$. Using the fact that $\nu(f^2) \leq \nu(f)/2^{\Omega(f)}$, it follows that
 \begin{align*}
\sum_{D \in \mathcal{H}_{2g+1}} P_{I_0}(D;N_u)^{s_0} & \leq \sum_{D \in \mathcal{M}_{2g+1}} P_{I_0}(D;N_u)^{s_0} \leq q^{2g+1} (s_0)!  \sum_{\substack{P | f \Rightarrow \deg(P) \in I_{0} \\ \Omega(f) =  s_{0}/2}} \frac{|a_{\beta}(f;N_{u})|^2 \nu(f) }{2^{\Omega(f)}|f|^{1+2\beta}} \\
&= q^{2g+1} \frac{(s_0)!}{(s_0/2)! 2^{s_0/2}} \Big( \sum_{P \in I_0} \frac{|a_{\beta}(P;N_u)|^2}{|P|^{1+2\beta}} \Big)^{s_0/2}.
\end{align*}
Now we treat the sum over $P \in I_0$ similarly as in the proof of Lemma \ref{sum_primes}, and the conclusion follows.
\end{proof}

We also need the following two lemmas, the second of which is a variant of the first. The first lemma uses the large sieve inequality; it has the advantage that it allows one to choose a longer Dirichlet polynomial, but it introduces an extra term of the form $q^{Ag/(\log g)^{1/4}}$ in the upper bound. The second lemma uses a simpler orthogonality of characters argument and removes the $q^{Ag/(\log g)^{1/4}}$ term, but only allows for shorter polynomials. 

\begin{lem} \label{j+1}
For $0 \leq j<K$, let $\ell_j$ be even parameters, and let $s_{j+1}$ be even such that $2 \sum_{h \leq j} \ell_h N_h+s_{j+1}N_{j+1} \leq 4g$. Let $j<u \leq K$. 
Then we have
\begin{align*}
\sum_{D \in \mathcal{H}_{2g+1}} & \prod_{h =0}^j  E_{\ell_{h}} \Big(  k P_{I_{h}}(D; N_{j})\Big) \Big(P_{I_{j+1}}(D;N_{u}) \Big)^{s_{j+1}}  \leq q^{2g+A g/(\log g)^{1/4}} \exp \Big( O\Big(k^2 \Big( \log \frac{1}{N_j \beta} \Big)^{2 \gamma(N_j)} \Big) \\
& \times (s_{j+1}/2)! \Big(2\sqrt{\log \frac{N_{j+1}}{N_j}}\Big)^{s_{j+1}}B(N_{j+1})^{s_{j+1}} \Big( \log \frac{1}{N_{j+1} \beta} \Big)^{\gamma(N_{j+1})s_{j+1}}  N_{j}^{3k^2 B(N_{j})^2 \Big( \log \frac{1}{N_{j} \beta} \Big)^{2 \gamma(N_{j})}},
\end{align*}
for some absolute constant $A>0$.
\end{lem}
\begin{proof}
Let $J \subset \{0,\ldots,j\}$ be the subset of indices $h$ such that $E_{\ell_{h}}(k P_{I_{h}}(D;N_{j})) >1$. Then
\begin{align}
 \sum_{D \in \mathcal{H}_{2g+1}} &  \prod_{h =0}^j  E_{\ell_{h}} \Big(  k P_{I_{h}}(D; N_{j})\Big) \Big(P_{I_{j+1}}(D;N_{u}) \Big)^{s_{j+1}} \label{gd} \\
 & \leq \sum_{D \in \mathcal{H}_{2g+1}} \prod_{h \in J} E_{\ell_{h}}^2 \Big(  k P_{I_{h}}(D; N_{j})\Big) \Big(P_{I_{j+1}}(D;N_{u}) \Big)^{s_{j+1}} \nonumber \\
 &=(s_{j+1}/2)!^2 \sum_{D \in \mathcal{H}_{2g+1}} \Bigg(\prod_{h \in J}  \Big( \sum_{\substack{P|f_h \Rightarrow P \in I_{h} \\ \Omega(f_h) \leq \ell_{h}}} \frac{k^{\Omega(f_h)} a(f_h;N_{j}) \nu(f_h) \chi_D(f_h)}{|f_h|^{1/2+\beta}} \Big) \nonumber \\
 & \times \sum_{\substack{P|f_{j+1} \Rightarrow P \in I_{j+1} \\ \Omega(f_{j+1}) = s_{j+1}/2}} \frac{a(f_{j+1};N_{u})\nu(f_{j+1}) \chi_D(f_{j+1})}{|f_{j+1}|^{1/2+\beta}} \Bigg)^2 \nonumber  \\
 &= (s_{j+1}/2)!^2 \sum_{D \in \mathcal{H}_{2g+1}} \Bigg(\sum_{\substack{P | B \Rightarrow \deg(P) \leq N_{j+1} \\ \deg(B) \leq \sum_{h \in J} \ell_{h} N_{h} + s_{j+1} N_{j+1}/2}}  \frac{\nu(B)c(B)  \chi_D(B)}{|B|^{1/2+\beta}}  \Bigg)^2, \nonumber
 \end{align}
 where
 $$c(B) = \sum_{\substack{ B = (\prod_{h \in J} f_h) f_{j+1} \\ \Omega(f_h) \leq \ell_{h} \\ \Omega(f_{j+1})=s_{j+1}/2}} k^{ \sum_{h \in J} \Omega(f_h)} ( \prod_{h \in J} a(f_h;N_{j})) a(f_{j+1};N_{u}).$$
 Using the large sieve inequality in Lemma \ref{large_sieve}, we get that
 \begin{align*}
 \eqref{gd} \leq (s_{j+1}/2)!^2 (q^{2g} + q^{\sum_{h \leq j} \ell_{h} N_{h}+s_{j+1} N_{j+1}/2})q^{A g/(\log g)^{1/4}} \sum_{B_1B_2=\square}  \frac{\nu(B_1) \nu(B_2) |c(B_1)c(B_2)|}{|B_1B_2|^{1/2+\beta}}.
 \end{align*}
We write $B_1=( \prod_{h \in J} A_h) A_{j+1}$ and $B_2=( \prod_{h \in J} C_h) C_{j+1}$. Note that the condition $B_1 B_2=\square$ is equivalent to $A_h C_h = \square$ for $h \leq j$ and $A_{j+1} C_{j+1} = \square$, since the $A_h$ and $A_{j+1}$ are pairwise coprime (and the same holds for $C_h$ and $C_{j+1}$). Using the condition that $2 \sum_{h \leq j} \ell_h N_h+s_{j+1} N_{j+1} \leq 4g$, we get that
 \begin{align}
& \eqref{gd}  \leq (s_{j+1}/2)!^2  q^{2g+Ag/(\log g)^{1/4}} \sum_{\substack{ P|A_{j+1} C_{j+1} \Rightarrow P \in I_{j+1} \\ A_{j+1} C_{j+1} = \square  \\ \Omega(A_{j+1})=\Omega(C_{j+1} )=s_{j+1}/2}} \frac{ | a(A_{j+1} C_{j+1};N_{u})|  \nu(A_{j+1}) \nu(C_{j+1})}{  |A_{j+1} C_{j+1}|^{1/2+\beta}}\label{af}\\
 & \times  \prod_{h \in J} \sum_{\substack{P|A_h C_h \Rightarrow P \in I_{h} \\  A_h C_h  = \square \\ \Omega(A_h), \Omega(C_h) \leq \ell_h}} \frac{  k^{\Omega(A_h)+\Omega(C_h) }  |a(A_h C_h ;N_{j}) | \nu(A_h) \nu(C_h) }{  |A_h C_h|^{1/2+\beta} }. \nonumber 
 \end{align}
 The conditions $A_h C_h=\square$ and $A_{j+1} C_{j+1} = \square$ can be rewritten as $A_h \mapsto D_h A_h^2, C_h \mapsto D_h C_h^2$, with $(A_h,C_h)=1$ and $A_{j+1} \mapsto D_{j+1} A_{j+1}^2, C_{j+1} \mapsto D_{j+1} C_{j+1}^2$ with $(A_{j+1},C_{j+1})=1$. Removing the coprimality conditions and using the bound $\nu(fh^2) \leq \nu(f) \nu(h)$ for any polynomials $f,h$ and $\nu(f) \leq 1$ for any $f$, we get that
 \begin{align*}
 & \eqref{gd}  \ll (s_{j+1}/2)!^2  q^{2g+A g/(\log g)^{1/4}}\sum_{\substack{ P|A_{j+1} C_{j+1} D_{j+1} \Rightarrow P \in I_{j+1} \\  \Omega(A_{j+1}^2D_{j+1})=\Omega(C_{j+1}^2D_{j+1} )=s_{j+1}/2}}\\
&  \frac{| a(A_{j+1} C_{j+1}D_{j+1};N_{u})|^2  \nu(A_{j+1}) \nu(C_{j+1})\nu(D_{j+1})}{  |A_{j+1} C_{j+1}D_{j+1}|^{1+2\beta}} \\
 & \times  \prod_{h \in J} \Bigg( \sum_{\substack{P|A_h C_h D_h\Rightarrow P \in I_{h} \\  \Omega(D_hA_h^2),\Omega(D_h C_h^2) \leq\ell_{h} }}\frac{  k^{2\Omega(A_h)+2\Omega(C_h)+2 \Omega(D_h) }  |a(A_h C_h D_h ;N_j)|^2  \nu(A_h) \nu(C_h) \nu(D_h) }{ |A_h C_h D_h|^{1+2\beta} }\Bigg)  .
 \end{align*}
 For the product over $h \in J$, we trivially bound the sums over $A_h, C_h, D_h$ as follows:
 \begin{align}
  \sum_{\substack{P|A_h C_h D_h \Rightarrow P \in I_{h} \\  \Omega(D_hA_h^2),\Omega(D_h C_h^2) \leq \ell_{h} }} & \frac{  k^{2\Omega(A_h)+2\Omega(C_h)+2 \Omega(D_h) }  |a(A_h C_h D_h ;N_{j})|^2  \nu(A_h) \nu(C_h) \nu(D_h) }{  |A_h C_h D_h|^{1+2\beta} } \label{ae} \\
 & \leq  \Big( \sum_{P|A_h  \Rightarrow P \in I_{h} }\frac{  k^{2\Omega(A_h) }  |a(A_h  ;N_{j})|^2  \nu(A_h)  }{  |A_h |^{1+2\beta} } \Big)^3 \nonumber \\
 & \leq \Big( \sum_{P|A_h  \Rightarrow P \in I_{h} }\frac{  k^{2\Omega(A_h) }  B(N_j)^{2\Omega(f)} \Big( \log \frac{1}{N_j \beta}\Big)^{2\gamma(N_j)\Omega(f)}   }{  |A_h |^{1+2\beta} } \Big)^3\nonumber  \\
 &= \prod_{P \in I_{h}} \Big(1- \frac{k^2B(N_j)^2 \Big( \log \frac{1}{N_{j} \beta} \Big)^{2 \gamma(N_{j})}}{|P|^{1+2\beta}} \Big)^{-3},\nonumber 
 \end{align}
 where we have used inequality \eqref{combined} in the third line. 
 To deal with the sum over $A_{j+1}, C_{j+1}$ we proceed as in the proof of Lemma \ref{sum_primes} (see equations \eqref{binom} and \eqref{bd_binom}), and it follows that
 \begin{align}
  \sum_{\substack{ P|A_{j+1} C_{j+1} D_{j+1} \Rightarrow P \in I_{j+1} \\  \Omega(A_{j+1}^2D_{j+1})=\Omega(C_{j+1}^2D_{j+1} )=s_{j+1}/2}} & \frac{ a(A_{j+1} C_{j+1} D_{j+1};N_{u})^2  \nu(A_{j+1}) \nu(C_{j+1})\nu(D_{j+1})}{  |A_{j+1} C_{j+1}D_{j+1}|^{1+2\beta}} \ll \frac{2^{s_{j+1}}}{(s_{j+1}/2)!} \nonumber  \\
 & \times  \Big( \sum_{P \in I_{j+1}} \frac{|a(P;N_{u})|^2}{|P|^{1+2\beta}}\Big)^{s_{j+1}/2},\label{ad}
 \end{align}
 where the implied constant above is absolute and does not depend on $k$.
 For the sum over $P \in I_{j+1}$, we use the Prime Polynomial Theorem, and \eqref{single}. 
  Combining \eqref{af}, \eqref{ae} and \eqref{ad} and using the facts that $B(N_u) \leq B(N_{j+1})$ and that 
 $$ \Big( \log \frac{1}{N_u \beta} \Big)^{\gamma(N_u)} \leq  \Big( \log \frac{1}{N_{j+1} \beta} \Big)^{\gamma(N_{j+1})},$$
  it follows that
 \begin{align*}
 \eqref{gd} & \leq q^{2g+Ag/(\log g)^{1/4}} (s_{j+1}/2)! \Big(2\sqrt{ \frac{N_{j+1}}{N_j}} B(N_{j+1})\Big)^{s_{j+1}} \prod_{h \in J} \prod_{P \in I_h} \Big(1- \frac{k^2B(N_{j})^2 \Big( \log \frac{1}{N_{j} \beta} \Big)^{2 \gamma(N_{j})}}{|P|^{1+2\beta}} \Big)^{-3} \\
 & \times  \Big( \log \frac{1}{N_{j+1} \beta} \Big)^{\gamma(N_{j+1})s_{j+1}} . 
 \end{align*}
  Now we have that
 \begin{align*}
 \prod_{h \in J} \prod_{P \in I_h}& \Big(1- \frac{k^2B(N_{j})^2 \Big( \log \frac{1}{N_{j} \beta} \Big)^{2 \gamma(N_{j})}}{|P|^{1+2\beta}} \Big)^{-3} \leq 
\prod_{\deg(P) \leq N_{j}} \Big(1- \frac{k^2B(N_{j})^2 \Big( \log \frac{1}{N_{j} \beta} \Big)^{2 \gamma(N_{j})}}{|P|^{1+2\beta}} \Big)^{-3} \\
&= N_{j}^{3k^2 B(N_{j})^2 \Big( \log \frac{1}{N_{j} \beta} \Big)^{2 \gamma(N_{j})}}  \exp \Big( O\Big(k^2 \Big( \log \frac{1}{N_j \beta} \Big)^{2 \gamma(N_j)} \Big) \Big),
\end{align*}
where the implied constant in the exponential term does not depend on $k$, and where we have used Mertens' theorem over function fields (see Lemma $3.6$ in \cite{hybrid}). Then we get that
\begin{align*}
\eqref{gd} & \leq \exp \Big( O\Big(k^2 \Big( \log \frac{1}{N_j \beta} \Big)^2 \Big)\Big)  q^{2g+A g/(\log g)^{1/4}} (s_{j+1}/2)! 
\Big(2\sqrt{ \frac{N_{j+1}}{N_j}} B(N_{j+1})\Big)^{s_{j+1}} \\
& \times \Big( \log \frac{1}{N_{j+1} \beta} \Big)^{\gamma(N_{j+1})s_{j+1}}  N_{j}^{3k^2 B(N_{j})^2 \Big( \log \frac{1}{N_{j} \beta} \Big)^{2 \gamma(N_{j})}} .
 \end{align*}
 \end{proof}
 
 A simplification of the argument above yields the following lemma.
 \begin{lem}
 \label{variant2}
For $0 \leq j<K$, let $\ell_j$ be even parameters, and let $s_{j+1}$ be even such that $ \sum_{h \leq j} \ell_h N_h+s_{j+1}N_{j+1} \leq 2g$. Let $j<u \leq K$. Then we have
\begin{align*}
\sum_{D \in \mathcal{H}_{2g+1}}  \prod_{h =0}^j  E_{\ell_{h}} \Big(  k P_{I_{h}}(D; N_{j})\Big) \Big(P_{I_{j+1}}(D;N_{u}) \Big)^{s_{j+1}}  \leq q^{2g+1} \exp \Big( O\Big(k^2 \Big( \log \frac{1}{N_j \beta} \Big)^{2 \gamma(N_j)} \Big)  \\
 \times  \frac{(s_{j+1})!}{(s_{j+1}/2)!2^{s_{j+1}/2}} \Big(\sqrt{ \log \frac{N_{j+1}}{N_j}} B(N_{j+1})\Big)^{s_{j+1}}\Big( \log \frac{1}{N_{j+1} \beta} \Big)^{\gamma(N_{j+1}) s_{j+1}} N_{j}^{\frac{k^2B(N_{j})^2}{2} \Big( \log \frac{1}{N_{j} \beta} \Big)^{2\gamma(N_{j})} }.
\end{align*}
 \end{lem}
 \begin{proof}
 Since the summands are positive (because $\ell_h$ and $s_{j+1}$ are even, see the explanation right before equation \eqref{taylor}), we have that
 $$\sum_{D \in \mathcal{H}_{2g+1}}  \prod_{h =0}^j  E_{\ell_{h}} \Big(  k P_{I_{h}}(D; N_{j})\Big) \Big(P_{I_{j+1}}(D;N_{u}) \Big)^{s_{j+1}} \leq \sum_{D \in \mathcal{M}_{2g+1}} \prod_{h =0}^j  E_{\ell_{h}} \Big(  k P_{I_{h}}(D; N_{j})\Big) \Big(P_{I_{j+1}}(D;N_{u}) \Big)^{s_{j+1}} ,$$
 and
 \begin{align}
 \sum_{D \in \mathcal{M}_{2g+1}} & \prod_{h =0}^j  E_{\ell_{h}} \Big(  k P_{I_{h}}(D; N_{j})\Big) \Big(P_{I_{j+1}}(D;N_{u}) \Big)^{s_{j+1}} = (s_{j+1})! \nonumber   \\
 & \times  \sum_{D \in \mathcal{M}_{2g+1}} \prod_{h=0}^{j} \bigg( \sum_{\substack{P | f_h \Rightarrow \deg(P) \in I_{h} \\ \Omega(f_h) \leq \ell_{h}}} \frac{k^{\Omega(f_h)}  a_{\beta}(f_h;N_{j}) \nu(f_h)\chi_D(f_h)}{|f_h|^{1/2+\beta}} \bigg) \nonumber  \\
 & \times \Big(\sum_{\substack{P|f_{j+1} \Rightarrow \deg(P) \in I_{j+1} \\ \Omega(f_{j+1})=s_{j+1}}} \frac{a_{\beta}(f_{j+1};N_u) \nu(f_{j+1}) \chi_D(f_{j+1})}{|f_{j+1}|^{1/2+\beta}} \Big) \label{to_bd4}
 \end{align}
 Interchanging the sums over $D$ and $f_h$ and $f_{j+1}$, note that if $f_0 \ldots f_{j+1} \neq \square$, then 
 $$\sum_{D \in \mathcal{M}_{2g+1}} \chi_D( f_0 \ldots f_{j+1}) =0,$$ since $\deg(f_0 \ldots f_{j+1}) \leq \sum_{h=0}^j \ell_h N_h+s_{j+1} N_{j+1} \leq 2g+1$. It follows that in \eqref{to_bd4}, we need $f_0 \ldots f_{j+1} = \square$, and since the $f_i$ are pairwise coprime, this happens if and only if each $f_h =\square$, for $h \leq j+1$. Bounding $\nu(f_h)^2 \leq \nu(f_h)/2^{\Omega(f_j)}$ and using the bound \eqref{combined}, we get that
 \begin{align*}
&  \sum_{D \in \mathcal{M}_{2g+1}}  \prod_{h =0}^j  E_{\ell_{h}} \Big(  k P_{I_{h}}(D; N_{j})\Big) \Big(P_{I_{j+1}}(D;N_{u}) \Big)^{s_{j+1}}  \leq q^{2g+1} (s_{j+1})! \\
 & \times  \prod_{h=0}^j \bigg( \sum_{\substack{P | f_h \Rightarrow \deg(P) \in I_{h} \\ \Omega(f_h) \leq \ell_{h}/2}} \frac{k^{2\Omega(f_h)} B(N_{j})^{2 \Omega(f_h)} \Big( \log \frac{1}{N_{j} \beta} \Big)^{2\gamma(N_{j}) \Omega(f_h)} }{2^{\Omega(f_h)}|f_h|^{1+2\beta}} \bigg)\\
 & \times  \Big( \sum_{\substack{P|f_{j+1} \Rightarrow P \in I_{j+1} \\ \Omega(f_{j+1})=s_{j+1}/2}} \frac{ B(N_u)^{2 \Omega(f_{j+1})} \Big( \log \frac{1}{N_u \beta} \Big)^{2 \gamma(N_u) \Omega(f_{j+1})} \nu(f_{j+1})}{2^{\Omega(f_{j+1})}|f_{j+1}|^{1+2\beta}} \Big)\nonumber  \\
& \leq  q^{2g+1} \frac{(s_{j+1})!}{(s_{j+1}/2)! 2^{s_{j+1}/2}} B(N_{j+1})^{s_{j+1}} \Big( \log \frac{1}{N_{j+1} \beta} \Big)^{\gamma(N_{j+1}) s_{j+1}}\\
& \times \prod_{h=0}^{j} \Bigg( \prod_{\deg(P) \in I_{h}} \Big(1- \frac{k^2B(N_{j+1})^2\Big( \log \frac{1}{N_{j+1} \beta} \Big)^{2\gamma(N_{j+1})}  }{2|P|^{1+2\beta}} \Big)^{-1} \Bigg) \Bigg( \sum_{P \in I_{j+1}} \frac{1}{|P|^{1+2\beta}} \Bigg)^{s_{j+1}/2} \nonumber .
 \end{align*}
 For the sum over $P \in I_{j+1}$ we use the Prime Polynomial Theorem \eqref{ppt} and we have
 
 \begin{align*}
\sum_{D \in \mathcal{M}_{2g+1}}&  \prod_{h =0}^j  E_{\ell_{h}} \Big(  k P_{I_{h}}(D; N_{j})\Big) \Big(P_{I_{j+1}}(D;N_{u}) \Big)^{s_{j+1}}  \leq q^{2g+1} \exp \Big( O\Big(k^2 \Big( \log \frac{1}{N_j \beta} \Big)^{2 \gamma(N_j)} \Big)\\
& \times   \frac{(s_{j+1})!\Big(\sqrt{\log \frac{N_{j+1}}{N_j}} B(N_{j+1})\Big)^{s_{j+1}}\Big( \log \frac{1}{N_{j+1} \beta} \Big)^{\gamma(N_{j+1}) s_{j+1}}}{(s_{j+1}/2)!2^{s_{j+1}/2}}  N_{j}^{\frac{k^2B(N_{j})^2}{2} \Big( \log \frac{1}{N_{j} \beta} \Big)^{2\gamma(N_{j})} }.
 \end{align*}
 \end{proof}
 A simplification of the argument above (when there is no contribution from the $j+1$ interval) yields the following lemma whose proof we omit because it follows from the previous lemma.
 \begin{lem} \label{sm1}
For $0 \leq j \leq K$, let $\ell_j$ be even parameters such that $\sum_{h \leq K} \ell_h N_h \leq 2g$. Then we have
\begin{align*}
\sum_{D \in \mathcal{H}_{2g+1}}  \prod_{h =0}^{K}  E_{\ell_{h}} \Big(  k P_{I_{h}}(D; N_{K})\Big)  \leq   q^{2g+1} \exp \Big( O\Big(k^2 \Big( \log \frac{1}{N_K \beta} \Big)^{2 \gamma(N_K)} \Big)  N_{K}^{\frac{k^2B(N_{K})^2}{2} \Big( \log \frac{1}{N_{K} \beta} \Big)^{2\gamma(N_{K})} }.
\end{align*}
 \end{lem}
\kommentar{\begin{proof}
Since the summands are positive, we have
$$\sum_{D \in \mathcal{H}_{2g+1}}  \prod_{h =0}^{K}  E_{\ell_{h}} \Big(  k P_{I_{h}}(D; N_{j})\Big)  \leq \sum_{D \in \mathcal{M}_{2g+1}}  \prod_{h =0}^{K}  E_{\ell_{h}} \Big(  k P_{I_h}(D; N_{j})\Big) ,$$
and 
\begin{align} 
\label{tb3}
&\sum_{D \in \mathcal{M}_{2g+1}}  \prod_{h =0}^{K}  E_{\ell_{h}} \Big(  k P_{I_h}(D; N_{j})\Big)  =  \sum_{D \in \mathcal{M}_{2g+1}}\prod_{h=0}^{K} \bigg( \sum_{\substack{P | f_h \Rightarrow \deg(P) \in I_{h} \\ \Omega(f_h) \leq \ell_{h}}} \frac{k^{\Omega(f_h)}  a_{\beta}(f_h;N_{K}) \nu(f_h)\chi_D(f_h)}{|f_h|^{1/2+\beta}} \bigg).
\end{align}

We note that if $f_0 \cdot \ldots \cdot f_K  \neq \square$, then the sum over $D$ vanishes since $2g+1 \geq \sum_{h=0}^K \ell_{h} N_{h}$. We then keep only the terms with $f_0 \cdot \ldots \cdot f_{K} =\square$, which happens if and only if $f_h= \square$ for $h \leq K$, since the polynomials are pairwise coprime. Bounding $\nu(f_h^2) \leq 1/2^{\Omega(f_h)}$, we get that 
\begin{align*} 
\eqref{tb3}  &\leq   q^{2g} \prod_{h=0}^{K}   \bigg( \sum_{\substack{P | f_h \Rightarrow \deg(P) \in I_{h} \\ \Omega(f_h) \leq \ell_{h}/2}} \frac{k^{2\Omega(f_h)} B(N_{K})^{2 \Omega(f_h)} \Big( \log \frac{1}{N_{K} \beta} \Big)^{2\gamma(N_{K}) \Omega(f_h)} }{2^{\Omega(f_h)}|f_h|^{1+2\beta}} \bigg)\nonumber  \\
& \ll  q^{2g} \prod_{h=0}^{K} \Big( \prod_{\deg(P) \in I_{h}} \Big(1- \frac{k^2B(N_{K})^2\Big( \log \frac{1}{N_{K} \beta} \Big)^{2\gamma(N_{K})}  }{2|P|^{1+2\beta}} \Big)^{-1} \Big) \nonumber  \\
& \ll  q^{2g} N_{K}^{\frac{k^2B(N_{K})^2}{2} \Big( \log \frac{1}{N_{K} \beta} \Big)^{2\gamma(N_{K})} },
\end{align*}
which finishes the proof.
\end{proof}}

\section{Proof of Theorem \ref{theorem_ub}}
\label{proof_mainthm}
Here, we begin the proof of Theorem \ref{theorem_ub} and consider different ranges for $\beta$. 
\subsection{The range $\beta \gg g^{-\frac{1}{k}+\epsilon}$, first step}
Note that a sharp upper bound was obtained in the range $\beta \gg g^{-\frac{1}{2k}+\epsilon}$ in \cite{ratios}. We then assume that $g^{-\frac{1}{k}+\epsilon} \ll \beta \ll g^{-\frac{1}{2k}+\epsilon}$. 

In what follows, the absolute constant $A$ might change from line to line.

We first assume that $k \geq 1$. 
In this case, we choose the parameters as follows: 
\begin{equation}
N_{0} = \Big[ \frac{ 4\log_q g (d-1/2)}{k\alpha(1+\epsilon)+2 \epsilon} \Big], \, s_{0} = 2 \Big[ \frac{2g}{N_{0}} \Big], \, \ell_{0} = 2\lceil s_{0}^d/2 \rceil,\label{0choice}
\end{equation} 
for some $1/2<d<1$.
For $1 \leq j \leq K$, we also choose 
\begin{equation}
N_{j} =[ r (N_{j-1}+1) ], \, s_{j} = 2 \Big[  \frac{ag}{2N_{j}}\Big], \, \ell_{j} = 2 \lceil s_{j}^d/2 \rceil, \label{n1j}
\end{equation}
for some constants $a< 4, r>1$. We choose $a, d,r$ such that
\begin{equation}
\label{t_choice}
\frac{a(d-1/2)}{r} = 2-\frac{k\epsilon}{2}.
\end{equation}
For example, we can choose
\begin{equation}
 a = 4-\Big(\frac{k\epsilon}{4}\Big)^2, \, d = \frac{16+k\epsilon}{16+2k\epsilon}, \,  r= \frac{8-k\epsilon}{8-2k\epsilon}.
 \label{adr}
 \end{equation}
We choose $K$ such that $N_K$ is the largest integer of the form given in \eqref{n1j} for which
\begin{equation}
N_K  \leq [k (\log g)^{\frac{5}{4}}]-1,
\label{nk3}
\end{equation}
Note that the conditions in Lemmas \ref{sum_primes}, \ref{j+1} and \ref{sm1} are satisfied with the above choices of parameters.

We now use Lemma \ref{lem_initial}. If $D \notin \mathcal{T}_{0}$, then there exists some $0 \leq u \leq K$ such that 
$$ 1< \Big(  \frac{ke^2}{\ell_{0}} P_{I_{0}} (D;N_{u}) \Big)^{s_{0}},$$ and we then get that 
\begin{align*} 
\sum_{D \notin \mathcal{T}_{0}} &  \frac{1}{|L(1/2+\beta+it,\chi_D)|^k} \leq \sum_{D \in \mathcal{H}_{2g+1}} \frac{1}{|L(1/2+\beta+it,\chi_D)|^k}  \Big( \frac{ke^2}{\ell_{0}} P_{I_{0}}(D;N_{u}) \Big)^{s_{0}}.  
\end{align*}
We use the pointwise bound in Lemma \ref{pointwise} for the $L$--function, and then
\begin{align*}
\sum_{D \notin \mathcal{T}_{0}} &  \frac{1}{|L(1/2+\beta+it,\chi_D)|^k} \leq q^{gk \alpha(1+\epsilon)}  \Big( \frac{ke^2}{\ell_{0}} \Big)^{s_{0}} \sum_{D \in \mathcal{H}_{2g+1}} \Big( P_{I_{0}}(D;N_{u}) \Big)^{s_{0}}.
\end{align*}
Now using Lemma \ref{sum_primes}, we get that 
\begin{align*}
\sum_{D \notin \mathcal{T}_{0}}  \frac{1}{|L(1/2+\beta+it,\chi_D)|^k} & \leq q^{2g+gk \alpha(1+\epsilon)+A g/(\log g)^{1/4}}  \Big( \frac{ke^2}{\ell_{0}} \Big)^{s_{0}} (s_{0}/2)! (2e)^{s_{0}}  B(N_{0})^{s_{0}}  \\
& \times  \Big( \log \frac{1}{N_{0} \beta} \Big)^{\gamma(N_{0})s_{0}} (\log N_{0})^{s_{0}/2}.
\end{align*}
Using Stirling's formula and the expression for $\ell_{0}$ (equation \eqref{0choice}), we get that
\begin{align}
& \sum_{D \notin \mathcal{T}_{0}}  \frac{1}{|L(1/2+\beta+it,\chi_D)|^k}  \leq q^{2g+gk \alpha(1+\epsilon)+Ag/(\log g)^{1/4}} \exp \Big(-s_{0} (d-1/2) \log s_{0}  \Big) \nonumber \\
& \times \exp \Big(s_{0} \log \Big( 2^{1/2} e^{5/2} k B(N_{0})  \Big( \log  \frac{1}{N_{0} \beta} \Big)^{\gamma(N_{0})} \sqrt{ \log N_{0}}  \Big) \Big).
\label{first_contrib}
\end{align}
With the choice of parameters \eqref{0choice} it follows that
\begin{equation}
\sum_{D \notin \mathcal{T}_{0}}  \frac{1}{|L(1/2+\beta+it,\chi_D)|^k} = o(q^{2g}). \label{s10}
\end{equation}
Now we consider the contribution from $D \in \mathcal{T}_{0}$. 
Using Lemma \ref{lem_initial}, it follows that
\begin{align*}
\sum_{D \in \mathcal{T}_{0}}  \frac{1}{|L(1/2+\beta+it,\chi_D)|^k}  \leq \exp (O(k)) \Big(\sum_{D \in \mathcal{T}_{0}} S_{1}(D) + \sum_{D \in \mathcal{T}_{0}} S_{2}(D)\Big) .
\end{align*}
Now using Lemmas \ref{lem_initial} and \ref{sm1}, we have that
\begin{align*}
\sum_{D \in \mathcal{T}_{0}}& S_{1}(D) \leq q^{2g}  \Big(1-q^{-(N_{K}+1)\beta} \Big)^{-\frac{2gk}{N_{K}+1}}  (\log g)^{k/2} N_{K}^{\frac{k}{2}+\frac{k^2B(N_{K})^2}{2} \Big( \log \frac{1}{N_{K} \beta} \Big)^{2\gamma(N_{K})} } \\
& \times  \exp \Big( O \Big( k^2 \Big( \log \frac{1}{N_K \beta} \Big)^{2 \gamma(N_K)} \Big) \Big).
\end{align*}
Since $\beta \ll g^{-\frac{1}{2k}+\epsilon}$, it follows that $N_K \beta \to 0$, so $\gamma(N_K)=1$. Now we use the expression \eqref{b*} for $B(N_K)$ and the expression \eqref{nk3}. Recall that  $\beta \gg g^{-\frac{1}{k}+\epsilon}$ and $k \geq 1$. Then $\log(1/\beta) \leq \log g$, and 
$$ \Big(1-q^{-(N_{K}+1)\beta} \Big)^{-\frac{2gk}{N_{K}+1}}  \leq \exp \Big( \frac{2g}{(\log g)^{5/4}} \log \frac{1}{\beta (\log g)^{5/4}} \Big)  \leq q^{2g/(\log g)^{1/4}}.  $$ It follows that
\begin{equation}
\sum_{D \in \mathcal{T}_{0}} S_{1}(D) \leq q^{2g+2g/(\log g)^{1/4}} \exp (k^2  (\log g)^2 \log \log g). \label{s11d}
\end{equation} 
Now we consider the contribution from $S_{2}(D)$. Using Lemmas \ref{lem_initial} and Lemma \ref{j+1} and since $r<2$, we have that 
\begin{align*}
& \sum_{D \in \mathcal{T}_{0}} S_{2}(D) \leq q^{2g+A g/(\log g)^{1/4}}  (\log g)^{k/2} \sum_{j=0}^{K-1} (K-j) \exp \Big( O \Big( k^2 \Big( \log \frac{1}{N_j \beta} \Big)^{2 \gamma(N_j)} \Big) \Big)  \Big(1-q^{-(N_{j}+1)\beta} \Big)^{-\frac{2gk}{N_{j}+1}} \\
& \times  N_{j}^{k/2+3k^2 B(N_{j})^2 \Big( \log \frac{1}{N_{j} \beta} \Big)^{2 \gamma(N_{j})}} \Big( \frac{ke^2}{\ell_{j+1}} \Big)^{s_{j+1}}(s_{j+1}/2)! 2^{s_{j+1}}B(N_{j+1})^{s_{j+1}} \Big( \log \frac{1}{N_{j+1} \beta} \Big)^{\gamma(N_{j+1})s_{j+1}}  .
\end{align*}
Using Stirling's formula, we get (similarly as in \cite{ratios}):
\begin{align*}
\sum_{D \in \mathcal{T}_{0}} &S_{2}(D) \leq q^{2g+ A g/(\log g)^{1/4}}  (\log g)^{k/2}   \sum_{j=0}^{K-1} (K-j)\sqrt{s_{j+1}}  \exp \Big( \frac{2gk}{N_j} \log(1/\beta) -\frac{2gk}{N_j } \log (N_j) \\
&-(d-1/2) s_{j+1} \log s_{j+1} +s_{j+1} \log \Big(2^{1/2} e^{3/2} k B(N_{j+1}) \Big( \log \frac{1}{ N_{j+1}\beta} \Big)^{\gamma(N_{j+1})}\Big)\nonumber  \\
& + (\log N_{j}) \Big(3B(N_{j})^2k^2(\log \frac{1}{N_{j} \beta} )^{2\gamma(N_{j})} +k/2 \Big) \Big)  \exp \Big( O \Big( k^2 \Big( \log \frac{1}{N_j \beta} \Big)^{2 \gamma(N_j)}\Big).
\end{align*}
Now using formulas \eqref{n1j}, we have
$$s_{j+1} = \frac{ag}{r N_j} + O \Big( \frac{g}{N_j^2}+1 \Big),$$ and
$$\log s_{j+1} = \log \frac{g}{N_j} + O(1).$$
Hence it follows that 
\begin{align}
\sum_{D \in \mathcal{T}_{0}} &S_{2}(D) \leq q^{2g+ A g/(\log g)^{1/4}}  (\log g)^{k/2}   \sum_{j=0}^{K-1} (K-j)\sqrt{s_{j+1}} \exp \Big(  \frac{g \log g}{N_{j}} \Big(2k \alpha- \rat\Big) \label{j+1_bd} \\
&+ \frac{g \log N_{j}}{N_{j}} \Big( \rat-2k \Big) + \frac{ag}{r N_{j}} \log \Big( 2^{1/2} e^{3/2}c k B(N_{j+1}) \Big( \log \frac{1}{ N_{j+1}\beta} \Big)^{\gamma(N_{j+1})}\Big)\nonumber  \\
& + (\log N_{j}) \Big(3B(N_{j})^2k^2(\log \frac{1}{N_{j} \beta} )^{2\gamma(N_{j})} +k/2 \Big) \Big)  \exp \Big( O \Big( k^2 \Big( \log \frac{1}{N_j \beta} \Big)^{2 \gamma(N_j)}\Big), \nonumber 
\end{align}
for some absolute constant $c$ (not depending on $k$).

Since $\beta \gg g^{-\frac{1}{k}+\epsilon}$, and given \eqref{t_choice}, we have $2k\alpha - \rat \leq -k\epsilon$. Since $k \geq 1$, we also have $\rat-2k \leq  -k\epsilon/2$, and hence the expression above is increasing as a function of $j$. Then we have:
\begin{align*}
\sum_{D \in \mathcal{T}_{0}} &S_{2}(D) \leq q^{2g+ A g/(\log g)^{1/4}} K (\log g)^{k/2}  \exp \Big(  \frac{g \log g}{N_{K-1}} \Big(2k \alpha- \rat\Big) \\
&+ \frac{g \log N_{K-1}}{N_{K-1}} \Big( \rat-2k \Big) + \frac{ag}{r N_{K-1}} \log \Big(2^{1/2} e^{3/2} c k B(N_{K}) \Big( \log \frac{1}{ N_{K}\beta} \Big)^{\gamma(N_{K})}\Big)\nonumber  \\
& + (\log N_{K-1}) \Big(3B(N_{K-1})^2k^2(\log \frac{1}{N_{K-1} \beta} )^{2\gamma(N_{K-1})} +k/2 \Big) \Big)  \exp \Big( O \Big( k^2 \Big( \log \frac{1}{ \beta} \Big)^{2}\Big).
\end{align*}
Given \eqref{0choice} and \eqref{nk3}, we have that $K \asymp \log \log g$. Note that the term involving $N_{K-1}$ is negative, so we can bound 
\begin{equation}
\sum_{D \in \mathcal{T}_0} S_2(D) \leq q^{2g+Ag/(\log g)^{1/4}}.
\label{s12d}
\end{equation}

Combining the bounds \eqref{s10}, \eqref{s11d}, \eqref{s12d} leads to
\begin{equation}
\label{to_improve}
\sum_{D \in \mathcal{H}_{2g+1}}   \frac{1}{|L(1/2+\beta+it,\chi_D)|^k} \leq q^{2g+Ag/(\log g)^{1/4}} \exp \Big( k^2 (\log g)^2 (\log \log g) \Big).
\end{equation}
(Recall that $A$ could change from line to line and does not depend on $k$). 

Now using H\"{o}lder's inequality, we have that 
\begin{align*}
\sum_{D \in \mathcal{H}_{2g+1}}   \frac{1}{|L(1/2+\beta+it,\chi_D)|^k} \leq  \Big( \sum_{D \in \mathcal{H}_{2g+1}} \frac{1}{|L(1/2+\beta+it,\chi_D)|^{km}} \Big)^{\frac{1}{m}} \Big(\sum_{D \in \mathcal{H}_{2g+1}} 1 \Big)^{\frac{m-1}{m}},
\end{align*}
and using the bound \eqref{to_improve}, we get that 
\begin{align*}
\sum_{D \in \mathcal{H}_{2g+1}}   \frac{1}{|L(1/2+\beta+it,\chi_D)|^k} \leq q^{2g+\frac{Ag}{m(\log g)^{1/4}}} \exp \Big(m k^2 (\log g)^2 \log \log g \Big).
\end{align*}
We choose $m= \frac{\sqrt{g}}{k (\log g)^{9/8}\sqrt{\log \log g}},$ and then it follows that
\begin{align}
\sum_{D \in \mathcal{H}_{2g+1}}   \frac{1}{|L(1/2+\beta+it,\chi_D)|^k} \leq q^{2g+C k \sqrt{g}(\log g)^{7/8}\sqrt{\log \log g}},
\label{to_improve2}
\end{align}
for some $C>0$ which does not depend on $k$.
\subsection{The case $\beta \gg g^{-\frac{1}{k}+\epsilon}$, the second step}
We will now repeat the argument above to improve the bound \eqref{to_improve2}. Throughout the argument, rather than using the large sieve inequality (Lemmas \ref{sum_primes}, \ref{j+1}), we will use Lemmas \ref{variant1} and \ref{variant2}. We again first assume that $k \geq 1$.

We will make the following choice of parameters:
\begin{equation}
\label{n_0_again}
N_0 = \Big[\frac{ \sqrt{g}(d-1/2)(\log g)^{1/8}}{4Ck (\log q)  \sqrt{\log \log g}}\Big], \, s_0= 2 \Big[ \frac{g}{2N_0} \Big], \, \ell_0=2 \lceil s_0^d/2 \rceil,
\end{equation}
where $C$ is the constant in \eqref{to_improve}.
For $1 \leq j \leq K$, we pick $N_j, s_j,\ell_j$ as in \eqref{n1j}. We choose $a,d,r$ such that
\begin{equation}
 \rat = 1-4k\epsilon. \label{rat5} 
 \end{equation} 
  For example, we can pick 
\begin{equation}
a=  \frac{2(1-3k\epsilon)}{1-2k\epsilon}, \, d=\frac{2-7k\epsilon}{2-6k\epsilon}, \, r=\frac{1}{1-2k\epsilon}.
\label{adr3}
\end{equation}
and we choose $K$ such that $N_K$ is the largest integer of the form \eqref{n1j} such that
\begin{equation}
\label{nk5}
N_K \leq c_1 g,
\end{equation}
where $c_1>0$ is a small constant such that 
\begin{equation}
\frac{c_1^{1-d} a^d 2^{1-d}}{r^{1-d}-1} g +  \frac{4c_1r}{r-1} g + g^d N_0^{1-d} < (2-a) g.
\label{condition_c1}
\end{equation}
Note that it is possible to choose such a constant $c_1$ since the last term above is of size $o(g)$.

Also note that the condition \eqref{condition_c1} above ensures that 
\begin{equation*}
\sum_{h=0}^K \ell_h N_h \leq 2g,
\end{equation*}
and that $$\sum_{h=0}^j \ell_h N_h+s_{j+1}N_{j+1} \leq 2g,$$
for $j \leq K-1$.
Indeed, note that
\begin{align*}
\sum_{h=0}^K & \ell_h N_h + s_{j+1} N_{j+1} \leq \ell_0 N_0 + \sum_{h=1}^K \ell_h N_h + a g \leq  (s_0^d+2)N_0 + \sum_{h=1}^K (s_h^d+2) N_h+ag \\
& \leq g^d N_0^{1-d} +2 \sum_{h=0}^K N_0^{1-d} +\sum_{h=0}^K a^d g^d N_h^{1-d}+ag.
\end{align*}
In the above, we used the fact that $\ell_h \leq s_h^d+2$ for $h \leq K$. Further using the fact that $N_h \leq 2r^h N_0$, and computing the geometric series above shows that with the choice \eqref{condition_c1}, indeed the two conditions in Lemmas \ref{variant2} and \ref{sm1} are satisfied.

Now we proceed as in the previous step. If $D \notin \mathcal{T}_{0}$, then there exists some $0 \leq u \leq K$ such that 
$$ 1< \Big(  \frac{ke^2}{\ell_{0}} P_{I_{0}} (D;N_{u}) \Big)^{s_{0}},$$ and we then get that 
\begin{align*} 
\sum_{D \notin \mathcal{T}_{0}} &  \frac{1}{|L(1/2+\beta+it,\chi_D)|^k} \leq \sum_{D \in \mathcal{H}_{2g+1}} \frac{1}{|L(1/2+\beta+it,\chi_D)|^k}  \Big( \frac{ke^2}{\ell_{0}} P_{I_{0}}(D;N_{u}) \Big)^{s_{0}}.
\end{align*}
Rather than using the pointwise bound for the $L$-function, we use the Cauchy-Schwarz inequality and we then have
\begin{align*}
\sum_{D \notin \mathcal{T}_{0}} &  \frac{1}{|L(1/2+\beta+it,\chi_D)|^k} \leq  \Big(\frac{ke^2}{\ell_{0}} \Big)^{s_0} \Big( \sum_{D \in \mathcal{H}_{2g+1}} \frac{1}{|L(1/2+\beta+it,\chi_D)|^{2k}} \Big)^{1/2} \\
& \times  \Big( \sum_{D \in \mathcal{H}_{2g+1}}  \Big(  P_{I_{0}}(D;N_{u}) \Big)^{2s_{0}} \Big)^{1/2}.
\end{align*}

For the first term above, we can use the bound \eqref{to_improve2}, while for the second we use Lemma \ref{variant1} and Stirling's formula, and we get that
\begin{align*}
\sum_{D \notin \mathcal{T}_{0}} &  \frac{1}{|L(1/2+\beta+it,\chi_D)|^k} \leq q^{2g+1+C k \sqrt{g} (\log g)^{7/8} \sqrt{\log \log g}} \exp ( -s_0(d-1/2) \log s_0 )\\
& \times \exp \Big(s_{0} \log \Big( 2^{1/2} e^{5/2} k B(N_{0})  \Big( \log  \frac{1}{N_{0} \beta} \Big)^{\gamma(N_{0})} \sqrt{ \log N_{0}}  \Big) \Big).
\end{align*}
With the choice of parameters \eqref{n_0_again}, we have
\begin{equation}
\label{s02}
\sum_{D \notin \mathcal{T}_{0}}   \frac{1}{|L(1/2+\beta+it,\chi_D)|^k} = o(q^{2g}).
\end{equation}
Now we proceed as before and we have that
\begin{align*}
\sum_{D \in \mathcal{T}_{0}}  \frac{1}{|L(1/2+\beta+it,\chi_D)|^k}  \leq \sum_{D \in \mathcal{T}_{0}} S_{1}(D) + \sum_{D \in \mathcal{T}_{0}} S_{2}(D) .
\end{align*}
Using Lemmas \ref{lem_initial} and \ref{sm1}, we have that
\begin{align*}
\sum_{D \in \mathcal{T}_{0}} S_{1}(D)  & \leq q^{2g+1}  \exp \Big( O\Big(k^2 \Big( \log \frac{1}{N_K \beta} \Big)^{2 \gamma(N_K)} \Big)  \exp\Big( \frac{2gk}{N_K+1} \log \frac{1}{1-q^{-(N_K+1)\beta}} \Big)  (\log g)^{k/2}\\
& \times  N_{K}^{\frac{k}{2}+\frac{k^2B(N_{K})^2}{2} \Big( \log \frac{1}{N_{K} \beta}\Big)^{2\gamma(N_K)}}.
\end{align*}
With the choice \eqref{nk5} for $N_K$, note that we have $N_K \beta \to \infty$, so $\gamma(N_K)=0$. We have
$$ \exp\Big( \frac{2gk}{N_K+1} \log \frac{1}{1-q^{-(N_K+1)\beta}} \Big)  = O(1),$$
and then 


\begin{equation}
\label{s11d2}
\sum_{D \in \mathcal{T}_{0}} S_{1}(D) \leq  q^{2g+1}  \exp \Big( O(k^2) \Big)(\log g)^{k/2}  g^{k(k+1)/2}.
\end{equation}
Now we proceed as before to deal with the term $S_2(D)$, but use Lemma \ref{variant2} instead of Lemma \ref{j+1}. Similarly to the bound \eqref{j+1_bd}, it follows that
\begin{align}
\sum_{D \in \mathcal{T}_{0}} &S_{2}(D) \leq q^{2g+1}  (\log g)^{k/2}   \sum_{j=0}^{K-1} (K-j) \exp \Big(  \frac{g \log g}{N_{j}} \Big(2k \alpha- \rat\Big) \label{to_bd5}  \\
&+ \frac{g \log N_{j}}{N_{j}} \Big( \rat-2k \Big) + \frac{ag}{r N_{j}} \log \Big( e^{3/2} c k B(N_{j+1}) \Big( \log \frac{1}{ N_{j+1}\beta} \Big)^{\gamma(N_{j+1})}\Big)\nonumber  \\
& + (\log N_{j}) \Big(B(N_{j})^2k^2(\log \frac{1}{N_{j} \beta} )^{2\gamma(N_{j};\beta)} /2+k/2 \Big) \Big) \exp \Big( O\Big(k^2 \Big( \log \frac{1}{N_j \beta} \Big)^{2 \gamma(N_j)} \Big),\nonumber 
\end{align}
for some constant $c>0$.
Because $\beta \ll g^{-\frac{1}{2k}+\epsilon}$, we have $2k\alpha-\rat \geq k\epsilon,$ in light of \eqref{rat5}. We also have that $\rat-2k<0$, since $k \geq 1$.
Now note the function in the sum over $j$ above is decreasing as a function of $N_j$ for $N_j \leq  \delta  g^{(2k\alpha-\rat)/(2k-\rat)}$ and is increasing for $N_j > \delta g^{(2k\alpha-\rat)/(2k-\rat)}$ for some constant $\delta>0$.  If $2k(2\alpha-1) \leq \rat$, then since $N_0 \asymp \sqrt{g} (\log g)^{1/8}/\sqrt{\log \log g}$, we have that $N_j> \delta g^{ (2k\alpha-\rat)/(2k-\rat)}$ for all $j \leq K-1$, and hence the function of $N_j$ in \eqref{to_bd5} is increasing.
Hence
\begin{align*}
\sum_{D \in \mathcal{T}_{0}} &S_{2}(D) \leq q^{2g+1}  K (\log g)^{k/2}  \exp \Big(\frac{g  \log g (\alpha-1)}{N_{K-1}} +O(1)+ \frac{k(k+1)}{2} \log g \Big).
\end{align*}
Using the fact that $\alpha-1 \leq -\epsilon$ (recall that $\beta \gg g^{-\frac{1}{k}+\epsilon}$ and $k \geq 1$), that $N_{K-1} \asymp g$ and that $K \asymp \log g$, it follows that
\begin{equation}
\label{good}
 \sum_{D \in \mathcal{T}_{0}} S_{2}(D) =o \Big( q^{2g}  g^{k(k+1)/2} \Big).
 \end{equation}

 Now if $k \geq 3/2$, then since $\beta \gg g^{-\frac{1}{k}+\epsilon}$, it follows that $2k(2\alpha-1) \leq \rat$, so the bound \eqref{good} holds in this case.

Putting things together (equations \eqref{s02}, \eqref{s11d2}, \eqref{good}) it follows that for $k \geq 3/2$ and $\beta \gg g^{-\frac{1}{k}+\epsilon}$, then 
\begin{equation}
\sum_{D \in \mathcal{H}_{2g+1}}   \frac{1}{|L(1/2+\beta+it,\chi_D)|^k} \leq  q^{2g+1}  \exp \Big( O(k^2) \Big) (\log g)^{k(k+2)/2} g^{k(k+1)/2}.
\label{3/2}
\end{equation}
This proves the bound \eqref{improved} in Theorem \ref{theorem_ub}.

If $k <3/2$, let $m$ be such that $m k \geq 3/2$. Using H\"{o}lder's inequality, we have that
\begin{align*}
\sum_{D \in \mathcal{H}_{2g+1}} &  \frac{1}{|L(1/2+\beta+it,\chi_D)|^k} \leq \Big( \sum_{D \in \mathcal{H}_{2g+1}}  \frac{1}{|L(1/2+\beta+it,\chi_D)|^{mk}} \Big)^{\frac{1}{m}} \Big( \sum_{D \in \mathcal{H}_{2g+1}} 1 \Big)^{\frac{m-1}{m}}.
\end{align*}
Since $m k \geq 3/2$, we use \eqref{3/2}, and it follows that
$$\sum_{D \in \mathcal{H}_{2g+1}}  \frac{1}{|L(1/2+\beta+it,\chi_D)|^k} \ll q^{2g} (\log g)^{k(mk+2)/2}  g^{\frac{k}{2}(mk+1)}.$$
Picking $m=3/(2k)$, we get that
$$ \sum_{D \in \mathcal{H}_{2g+1}}  \frac{1}{|L(1/2+\beta+it,\chi_D)|^k} \ll q^{2g} (\log g)^{7k/4} g^{\frac{5k}{4}}.$$
This finishes the proof of the bound \eqref{second_bd} in Theorem \ref{theorem_ub}.
\subsection{The range $\beta \ll g^{-\frac{1}{k}+\epsilon}$}

Here, we choose the parameters as follows:
\begin{equation}
\label{solution}
N_0= \Big[ \frac{\log_q g ( 4(d-1/2)+2k\alpha - \rat)}{k \alpha(1+\epsilon)} \Big], \, s_0 = 2[2g/N_0],\, \ell_0 =2 \lceil s_0^d/2 \rceil,
\end{equation}
where we choose 
\begin{equation*}
a=4-k\epsilon, \, d = \frac{8-3k\epsilon}{8-2k\epsilon}, \, r = \frac{2-k\epsilon}{2-3k\epsilon},
\end{equation*}
so that 
\begin{equation}
\rat = 2-3k\epsilon.
\label{rat2}
\end{equation}
For $1 \leq j \leq K$, we choose $N_j, s_j$ and $\ell_j$ as in \eqref{n1j}. We choose $N_K$ to be the greatest integer of the form given in \eqref{n1j} such that 
\begin{equation}
\label{nk6}
N_K \leq c_2 g,
\end{equation}
where $c_2$ is a small constant such that 
\begin{align*}
\frac{4 c_2^{1-d} a^dr^{1-d}}{r^{1-d}-1} g + \frac{8r c_2}{r-1} g + 2 (4^dg^d) N_0^{1-d} \leq (4-a) g.
\end{align*}
The third term above is of size $o(g)$, so it possible to pick such a $c_2$. Note that the above ensures that the conditions in Lemmas \ref{sum_primes}, \ref{j+1}, \ref{sm1} are satisfied (see the explanation following the choice \eqref{condition_c1}.)
We now proceed similarly as in the previous case.

If $D \notin \mathcal{T}_{0}$, then for some $0 \leq u \leq K$, we have 
\begin{align}
\sum_{D \notin \mathcal{T}_{0}} & \frac{1}{|L(1/2+\beta+it,\chi_D)|^k} \leq \sum_{D \in \mathcal{H}_{2g+1}}  \frac{1}{|L(1/2+\beta+it,\chi_D)|^k} \Big( \frac{ke^2}{\ell_{0}} \Big)^{s_{0}} (P_{I_{0}}(D;N_{u}))^{s_{0}}.\label{tb11}
\end{align}
Now using the pointwise bound in Lemma \ref{pointwise} for the $L$--function in \eqref{tb11} and Lemma \ref{sum_primes}, similarly to equation \eqref{first_contrib}, it follows that
\begin{align*}
\sum_{D \notin \mathcal{T}_{0}} & \frac{1}{|L(1/2+\beta+it,\chi_D)|^k} \leq q^{2g+g k \alpha (1+\epsilon)+Ag/(\log g)^{1/4}} \exp \Big(-s_{0} (d-1/2) \log s_{0}  \Big)\\
& \times \exp \Big(s_{0} \log \Big( 2^{1/2} e^{5/2} k  B(N_{0})  \Big( \log  \frac{1}{N_{0} \beta} \Big)^{\gamma(N_{0})} \sqrt{ \log N_{0}}  \Big) \Big).
\end{align*}
 Note that with the choice \eqref{solution}, it follows that (after a relabeling of the $\epsilon$):
 \begin{equation}
\sum_{D \notin \mathcal{T}_{0}}   \frac{1}{|L(1/2+\beta+it,\chi_D)|^k} \ll q^{g (k \alpha+1+2k \epsilon) }.\label{t101}
 \end{equation}

Now we consider the contribution from $D \in \mathcal{T}_{0}$. 
Using Lemma \ref{lem_initial}, it follows that
\begin{align*}
\sum_{D \in \mathcal{T}_{0}}  \frac{1}{|L(1/2+\beta+it,\chi_D)|^k}  \ll \sum_{D \in \mathcal{T}_{0}} S_{1}(D) + \sum_{D \in \mathcal{T}_{0}} S_{2}(D) .
\end{align*}
Now using Lemma \ref{sm1} and similarly to the previous case, we have that
\begin{align*}
\sum_{D \in \mathcal{T}_{0}} S_{1}(D) \ll q^{2g} \exp \Big( O\Big(k^2 \Big( \log \frac{1}{N_K \beta} \Big)^{2 \gamma(N_K)} \Big)(\log g)^{k/2} N_{K}^{\frac{k}{2}+\frac{k^2B(N_{K})^2}{2} \Big( \log \frac{1}{N_{K} \beta} \Big)^{2\gamma(N_{K})} },
\end{align*}
and hence
\begin{equation}
\label{s11md}
\sum_{D \in \mathcal{T}_{0}} S_{1}(D) \ll q^{2g} (\log g)^{k/2} \exp \Big(k^2 \Big( \log \frac{1}{g\beta} \Big)^2 \log \log g \Big).
\end{equation}
We now evaluate the contribution from $S_2(D)$. Using Lemma \ref{j+1}, and similarly to equation \eqref{j+1_bd}, we get that 
\begin{align*}
\sum_{D \in \mathcal{T}_{0}} &S_{2}(D) \leq q^{2g+ A g/(\log g)^{1/4}}  (\log g)^{k/2}   \sum_{j=0}^{K-1} (K-j)\sqrt{s_{j+1}} \exp \Big(  \frac{g \log g}{N_{j}} \Big(2k \alpha- \rat\Big)  \\
&+ \frac{g \log N_{j}}{N_{j}} \Big( \rat-2k \Big) + \frac{ag}{r N_{j}} \log \Big( 2^{1/2} e^{3/2}c k B(N_{j+1}) \Big( \log \frac{1}{ N_{j+1}\beta} \Big)^{\gamma(N_{j+1})}\Big)\nonumber  \\
& + (\log N_{j}) \Big(3B(N_{j})^2k^2(\log \frac{1}{N_{j} \beta} )^{2\gamma(N_{j})} +k/2 \Big) \Big)  \exp \Big( O \Big( k^2 \Big( \log \frac{1}{N_j \beta} \Big)^{2 \gamma(N_j)}\Big), \nonumber 
\end{align*}
for some $c>0$.
Since $\beta \ll g^{-\frac{1}{k}+\epsilon}$ and in light of \eqref{rat2}, we have that $2k \alpha - \rat \geq k\epsilon$. It follows that the maximum of the sum over $j$ above can be attained either when $j=0$ or for $j=K-1$ (the latter is possible only if $\rat-2k<0$). Plugging in the values of $N_0$ and $N_{K-1}$ it follows that the maximum is attained at $j=0$, and keeping in mind the expression \eqref{solution} for $N_0$, we obtain that 
 $$\sum_{D \in \mathcal{H}_{2g+1}} S_2(D) \ll q^{g(k\alpha+1+2k\epsilon)}.$$
 Combining the equation above, \eqref{s11md} and \eqref{t101} finishes the proof of Theorem \ref{theorem_ub}.
 \kommentar{
\acom{OLD: We choose the parameters $a,d, r$ as in \eqref{t_choice} and \eqref{adr}.
We choose
\begin{equation}
N_{1,0} = \Big[ \frac{\log_q g (2k \alpha - \tf+\frac{4d-2}{q})}{k\alpha(1+2\epsilon)} \Big], \, s_{1,0} = 2\Big[ 2\frac{g}{qN_{1,0}} \Big], \, \ell_{1,0}= 2[s_{1,0}^d/2],
\label{1choice}
\end{equation} with $q$ as in \eqref{q}.
Note that with these choices of parameters,
\begin{equation}
\frac{ \frac{4d-2}{q}}{2k\alpha-\tf+\frac{4d-2}{q}} = \frac{\epsilon}{2k\alpha-2+2k\epsilon}.\label{constant_want}
\end{equation}
For ease of notation, let 
$$z=  \frac{\epsilon}{2k\alpha-2+2k\epsilon}.$$
For $1 \leq j \leq K_1-1$, we choose the parameters $N_{1,j}, s_{1,j}, \ell_{1,j}$ as in \eqref{n1j}. We also choose $N_{1,K_1}=[cg]$, where $c$ is a small constant \acom{add the condition on $c$.}

To prove \eqref{first_step_to_prove}, we could proceed exactly as in the previous section. However, in order to illustrate the future inductive step, we use H\"{o}lder's inequality instead.
If $D \notin \mathcal{T}_{1,0}$, then for some $0 \leq u \leq K_1$, we have 
\begin{align}
\sum_{D \notin \mathcal{T}_{1,0}} & \frac{1}{|L(1/2+\beta+it,\chi_D)|^k} \leq \sum_{D \in \mathcal{H}_{2g+1}}  \frac{1}{|L(1/2+\beta+it,\chi_D)|^k} \Big( \frac{ke^2}{\ell_{1,0}} \Big)^{s_{1,0}} (P_{I_{1,0}}(D;N_{1,u})^{s_{1,0}}\nonumber  \\
& \ll  \Big(\sum_{D \in \mathcal{H}_{2g+1}}  \frac{1}{|L(1/2+\beta+it,\chi_D)|^{pk}}\Big)^{1/p} \Big( \sum_{D \in \mathcal{H}_{2g+1}} (P_{I_{1,0}}(D;N_{1,u})^{s_{1,0}q} \Big)^{1/q}, \label{tb11}
\end{align}
where we choose 
\begin{equation}
q= \frac{ 4(2k\alpha-2+2k \epsilon-\epsilon)}{\epsilon (2+k\epsilon)(2k\alpha-2+2k\epsilon)}.
\label{q}
\end{equation}
Note that indeed $q>1$ as desired. Now using the pointwise bound in Lemma \ref{pointwise} for the $L$--function in \eqref{tb11} and Lemma \ref{sum_primes}, it follows that
\begin{align*}
\sum_{D \notin \mathcal{T}_{1,0}} & \frac{1}{|L(1/2+\beta+it,\chi_D)|^k} \leq q^{g k \alpha (1+3\epsilon/2)} \exp \Big(-s_{1,0} (d-1/2) \log s_{1,0}  \Big)\\
& \times \exp \Big(s_{1,0} \log \Big( 2^{1/2} e^{3/2} k  B(N_{1,0})  \Big( \log  \frac{1}{N_{1,0} \beta} \Big)^{\gamma(N_{1,0};\beta)} \sqrt{ \log N_{1,0}}  \Big) \Big).
\end{align*}
 Note that with the choice \eqref{1choice} and in light of \eqref{constant_want}, it follows that
 \begin{equation}
 \frac{1}{|\mathcal{H}_{2g+1}|}\sum_{D \notin \mathcal{T}_{1,0}}   \frac{1}{|L(1/2+\beta+it,\chi_D)|^k} \ll q^{gk \alpha (1+2\epsilon) \Big( 1- \frac{\epsilon}{2k\alpha - 2+2k\epsilon} \Big)}.\label{t101}
 \end{equation}
 \acom{Continue with $S_{m,1}$ and $S_{m,2}$.}
 Now we suppose that at step $m-1$, we have an upper bound 
  \begin{equation}
 \frac{1}{|\mathcal{H}_{2g+1}|}\sum_{D \in \mathcal{H}_{2g+1}}   \frac{1}{|L(1/2+\beta+it,\chi_D)|^k} \ll q^{gk \alpha c_{m-1} ( 1- z)^{m-1}},\label{apriori_induction}
 \end{equation} where $c_{m-1}$ is a constant which does not depend on $k$. We choose the parameters $a,d,r,q $ as before( see equations \eqref{adr}, \eqref{t_choice}, \eqref{q}). We also choose $N_{m,0}$ to be the integer part of the solution to
 \begin{equation}
\frac{\log(g/x)}{x} \Big(2k \alpha - \tf+ \frac{4d-2}{q} \Big) = (\log q) k \alpha c_{m-1} (1-z)^{m-1}.
 \end{equation} }}

\section{The asymptotic formula}
\label{section_asymp}
Here, we will prove Theorem \ref{asymptotic}. 
\begin{proof}We write 
\begin{align}
\label{dir}
 \sum_{D \in \mathcal{H}_{2g+1}}   \frac{1}{ L \Big( \frac{1}{2}+\beta,\chi_D \Big)^k } = \sum_{D \in \mathcal{H}_{2g+1}} \sum_{h \in \mathcal{M}} \frac{ \chi_D(h) \tau_{-k}(h)}{|h|^{1/2+\beta}},
\end{align}
where $\tau_{-k}$ is the generalized divisor function (i.e., the multiplicative function which is given by
$$ \tau_{-k} (P^r) = (-1)^r \binom{k}{r},$$ for $P$ a prime and $r$ a positive integer, and where $\binom{k}{r}$ is the generalized binomial coefficient.)
In equation \eqref{dir}, we truncate the sum over $h$ and we write
\begin{align*}
\sum_{D \in \mathcal{H}_{2g+1}}   \frac{1}{ L \Big( \frac{1}{2}+\beta,\chi_D \Big)^k } = \sum_{D \in \mathcal{H}_{2g+1}} \sum_{h \in \mathcal{M}_{< X}} \frac{ \chi_D(h) \tau_{-k}(h)}{|h|^{1/2+\beta}}+ \sum_{D \in \mathcal{H}_{2g+1}}   \sum_{D \in \mathcal{H}_{2g+1}} \sum_{h \in \mathcal{M}_{\geq X}} \frac{ \chi_D(h) \tau_{-k}(h)}{|h|^{1/2+\beta}},
\end{align*}
for some parameter $X$ which we will choose later. For now, we can think of $X \asymp g$. Let $M_1$ denote the first term above, and $M_2$ the second. For $M_2$, we use Perron's formula \eqref{perron} for the sum over $h$ and we have 
$$M_2 = \frac{1}{2 \pi i} \oint \sum_{D \in \mathcal{H}_{2g+1}} \frac{1}{\mathcal{L} \Big(  \frac{z}{q^{1/2+\beta}},\chi_D\Big)^k z^X(z-1)} \, dz,$$
where we are integrating over a circle of radius $|z|>1$. We can pick for example $|z|=q^{\Re(\beta)/2}$, and then using Theorem \ref{theorem_ub} it follows that 
\begin{equation*}
M_2 \ll q^{2g-X \Re(\beta)/2} \Big(\frac{1}{\beta} \Big)g^{ \frac{k}{2} \Big(1+ \max \{k,3/2\} \Big)} (\log g)^{\frac{k}{2} (2+\max \{k,3/2\})}.
\end{equation*}
Since $\Re(\beta) \gg \max \{g^{-1/k+\epsilon}, \log g/g\}$, we have that $1/\Re(\beta) \ll g$, so 
\begin{equation}
M_2 \ll q^{2g-X \Re(\beta)/2} g^{1+ \frac{k}{2} \Big(1+ \max \{k,3/2\} \Big)} (\log g)^{\frac{k}{2} (2+\max \{k,3/2\})}.
\label{m2_bound}
\end{equation}
We now focus on $M_1$. We write $M_1=M(\square)+M(\neq \square),$ corresponding to whether $h$ is a square or not in the expression for $M_1$.  
When $h \neq \square$, we use Lemma \ref{notpv}, and we get that 
\begin{align*}
M(\neq \square) \ll q^g \sum_{\substack{h \in \mathcal{M}_{<X} \\ h \neq \square}} \frac{|\tau_{-k}(h)|}{|h|^{1/2+\Re(\beta)}} |h|^{\epsilon}.
\end{align*}
Trivially bounding the sum over $h$ yields
\begin{equation}
M(\neq \square) \ll q^{\frac{X}{2}+g+\epsilon g}.
\label{notsq}
\end{equation}
Now for the term $M(\square)$, we use Lemma \ref{count} and we rewrite
\begin{align}
\label{square1}
M(\square) =\frac{q^{2g+1}}{\zeta_q(2)} \sum_{h \in \mathcal{M}_{<X/2}} \frac{\tau_{-k}(h^2)}{|h|^{1+2\beta}} \prod_{P|h} \Big(1+\frac{1}{|P|} \Big)^{-1} + O (q^{ \epsilon g}).
\end{align} 
We look at the generating series of the sum over $h$, and we have
\begin{align}
\sum_{h \in \mathcal{M}}  \frac{ \tau_{-h}(h^2)}{|h|^{1+2\beta}} & \prod_{P|h} \Big(1+\frac{1}{|P|} \Big)^{-1}  u^{\deg(h)} = \prod_{P} \Big(1+ \sum_{j=1}^{\infty} \frac{ \binom{k}{2j}u^{j \deg(P)}}{|P|^{j(1+2\beta)}} \Big(1+\frac{1}{|P|} \Big)^{-1}  \Big) \nonumber \\
&= \mathcal{Z} \Big(\frac{u}{q^{1+2\beta}} \Big)^{\binom{k}{2}} \mathcal{A}(u;\beta), \label{ab}
\end{align}
where $\mathcal{A}(u;\beta)$ is given by an Euler product which converges in a wider region (for example, for $|u|<\sqrt{q}$.) Using Perron's formula \eqref{perron} in \eqref{square1}, we get that
\begin{align*}
M(\square) =\frac{q^{2g+1}}{\zeta_q(2)} \frac{1}{2 \pi i}  \oint \frac{  \mathcal{Z} \Big(\frac{u}{q^{1+2\beta}} \Big)^{\binom{k}{2}} \mathcal{A}(u;\beta) }{(1-u)u^{[X/2]+1}} \, du,
\end{align*}
where we are integrating along a small circle around the origin. We can shift the contour of integration to $|u|=q^{1/2-\epsilon}$, and encounter the poles at $u=1$ and $u=q^{2\beta}$. We evaluate the residue of the pole at $u=1$ and bound the contribution from the residue at $u=q^{2\beta}$. We get that
\begin{equation*}\label{sq_t}
M(\square) = \frac{q^{2g+1}}{\zeta_q(2)} \zeta_q(1+2\beta)^{\binom{k}{2}} \mathcal{A}(1;\beta)+ O \Big( q^{2g-X \Re(\beta)} \frac{X^{k(k-1)/2-1}}{\beta}\Big).
\end{equation*}
Since $X \asymp g$ and $1/\Re(\beta) \ll g$, we have that
\begin{equation}
\label{sq_t}
M(\square) = \frac{q^{2g+1}}{\zeta_q(2)} \zeta_q(1+2\beta)^{\binom{k}{2}} \mathcal{A}(1;\beta)+ O \Big( q^{2g-X \Re(\beta)} g^{k(k-1)/2} \Big). 
\end{equation}
Now we combine the bounds \eqref{m2_bound}, \eqref{notsq} and \eqref{sq_t}, and pick $X= 2g(1-2\epsilon)$. Then it follows that
\begin{align*}
\avg \sum_{D \in \mathcal{H}_{2g+1}} & \frac{1}{ L \Big( \frac{1}{2}+\beta,\chi_D \Big)^k } = \zeta_q(1+2\beta)^{\binom{k}{2}} A(1;\beta) +O \Big(q^{-g \Re (\beta)(1-2\epsilon)} g^{1+ \frac{k}{2} \Big(1+ \max \{k,3/2\} \Big)} \\
& \times (\log g)^{\frac{k}{2} (2+\max \{k,3/2\})} \Big).
\end{align*}
The conclusion now follows after a relabeling of the $\epsilon$.
\end{proof}
\begin{proof}[Proof of Corollary \ref{corol}]
This easily follows from Theorem \ref{asymptotic}. Indeed, if $k \geq 1$ and $\Re(\beta) \gg g^{-\frac{1}{k}+\epsilon}$ then Theorem \ref{asymptotic} provides an asymptotic formula. If $k<1$, then we rewrite Theorem \ref{asymptotic} as
$$ \avg \sum_{D \in \mathcal{H}_{2g+1}}  \frac{1}{ L \Big( \frac{1}{2}+\beta,\chi_D \Big)^k } = \zeta_q(1+2\beta)^{\binom{k}{2}} A(1;\beta) +O \Big(q^{-g\Re \beta(1-\epsilon)} g^{1+5k/4} (\log g)^{7k/4}\Big).$$
Note that  the main term above is of size $(1/\beta)^{k(k-1)/2}$. If $g \Re(\beta)(1-\epsilon)  \log q > (1+5k/4+k(1-k)/2+\epsilon) \log g$, then indeed the expression above indeed provides an asymptotic formula. Corollary \eqref{corol} follows after a relabeling of $\epsilon$.
\end{proof}

 \bibliographystyle{amsalpha}

\bibliography{Bibliography}

\end{document}